\newcolumntype{C}{>{\centering\arraybackslash}p{12pt}}
\newtheorem{theorem}{Theorem}
\newtheorem{lemma}{Lemma}
\newtheorem{corollary}[theorem]{Corollary}
\newtheorem{conj}{Conjecture}
\newtheoremstyle{example}{\topsep}{\topsep}%
     {}
     {}
     {\bfseries}
     {}
     {\newline}
     {\thmname{#1}\thmnumber{ #2}\thmnote{ #3}}
\theoremstyle{example}
\def\clap#1{\hbox to 0pt{\hss #1\hss}}
\def\dfrac#1#2{\lower0.15ex\hbox{\large$\frac{#1}{#2}$}} 
\renewcommand{\geq}{\geqslant}
\renewcommand{\leq}{\leqslant}
\renewcommand{\ge}{\geqslant}
\renewcommand{\le}{\leqslant}
\newlength{\mycolwd}
\def\mtee{\makebox[\mycolwd]{$\cdot$}}
\def\iwb{\rightarrow\infty}
\def\sym{\mathcal{S}}
\def\eref#1{$(\ref{#1})$}
\def\sref#1{\S$\ref{#1}$}
\def\lref#1{Lemma~$\ref{#1}$}
\def\tref#1{Theorem~$\ref{#1}$}
\def\cref#1{Corollary~$\ref{#1}$}
\def\cjref#1{Conjecture~$\ref{#1}$}
\def\<{\langle}
\def\>{\rangle}
\title{Latin squares with no transversals}
\author{Nicholas J.\ Cavenagh\\
\small Department of Mathematics\\[-0.75ex]
\small University of Waikato\\[-0.75ex]
\small Private Bag 3105\\[-0.75ex]
\small Hamilton, New Zealand\\
\small\tt nickc@waikato.edu.au
\and 
Ian M.\ Wanless\thanks{Research supported by ARC grant DP150100506.}\\
\small School of Mathematical Sciences\\[-0.75ex]
\small Monash University\\[-0.75ex]
\small Vic 3800, Australia\\
\small\tt ian.wanless@monash.edu
}
\date{}
\begin{document}

\maketitle

\begin{abstract}
A $k$-plex in a latin square of order $n$ is a selection of $kn$ 
entries that includes $k$ representatives from each row and column
and $k$ occurrences of each symbol. A $1$-plex is also known as a
transversal.

It is well known that if $n$ is even then $B_n$, the addition table
for the integers modulo $n$, possesses no transversals. We show that
there are a great many latin squares that are similar to $B_n$ and have no
transversal. As a consequence, the number of species of
transversal-free latin squares is shown to be at least
$n^{n^{3/2}(1/2-o(1))}$ for even $n\iwb$.

We also produce various constructions for latin squares that have no
transversal but do have a $k$-plex for some odd $k>1$. We prove a 2002
conjecture of the second author that for all even orders $n>4$ there is a
latin square of order $n$ that contains a $3$-plex but no transversal.
We also show that for odd $k$ and $m\geq 2$, there exists a latin
square of order $2km$ with a $k$-plex but no $k'$-plex for odd $k'<k$.

\bigskip\noindent
Keywords: latin square; transversal; plex; triplex;
\end{abstract}

\section{Introduction}\label{s:intro} 

A $k\times n$ latin rectangle is an array containing $n$ symbols such
that each symbol occurs once in each row and at most once in each
column.  A latin square of order $n$ is an $n\times n$ latin
rectangle.  We index rows and columns by the set
$N_n=\{0,1,\dots,n-1\}$ and also use $N_n$ for our symbols.  A latin
square may then be specified as a set of ordered (row, column, symbol)
triples called {\em entries}.  Each entry is an element of $N_n\times
N_n\times N_n$.  This viewpoint allows a natural action of the wreath
product $\sym_n\wr\sym_3$ on the latin squares of order $n$, where
$\sym_n$ denotes the symmetric group of degree $n$.  Orbits under this
action are known as {\em species} (sometimes also called 
{\em main classes}).

A $k$-plex of a latin square of order $n$ is a selection of $kn$
entries such that exactly $k$ entries are chosen from each row and
each column, and each symbol is chosen $k$ times.  A $1$-plex is also
known as a {\em transversal} and a $3$-plex is known as a triplex. A
$k$-plex is said to be an {\em odd plex} if $k$ is odd. 
See \cite{transurv} for a survey on
transversals and plexes more generally.  

One of our goals is to show that there are very many transversal-free
Latin squares for each even order. We do this in the next section.
Our second major goal is to prove the following conjecture from
\cite{origplex}, which we do in \sref{s:triplexnotrans}.

\begin{conj}\label{cj:triplexnotrans}
For all even $n>4$ there is a latin square of order $n$ that contains
a triplex but no transversal.
\end{conj}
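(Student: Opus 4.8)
The naive guess $B_n$ fails outright: writing the entry of $B_n$ in cell $(i,j)$ as $i+j$ and summing over the $kn$ cells of a $k$-plex gives symbol sum $2k\binom{n}{2}$ from the rows and columns but $k\binom{n}{2}$ from the symbols, and for even $n$ these disagree modulo $n$ whenever $k$ is odd, so $B_n$ has no odd plex at all. The plan is therefore to build a square $L_n$ that is \emph{close} to $B_n$ --- close enough that any transversal is still obstructed by an additive count, but deviating just enough to admit a triplex. Concretely I would fix a small set $S$ of cells, let $L_n$ agree with $B_n$ off $S$, and install a twisted pattern on $S$ keeping $L_n$ latin (so that $S$ is a union of intercalate switches and perhaps one short row-cycle relative to $B_n$, of the kind used to produce transversal-free squares similar to $B_n$), with the displacements $d(i,j)=L_n(i,j)-i-j$ on $S$ chosen deliberately.

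Running the same count for $L_n$ shows that every $k$-plex $P$ satisfies $\sum_{(i,j)\in P\cap S}d(i,j)\equiv \tfrac{n}{2}\pmod n$ when $k$ is odd (and $\equiv 0$ when $k$ is even), since the symbol sum of $P$ equals $2k\binom{n}{2}+\sum_{(i,j)\in P\cap S}d(i,j)$ while also equalling $k\binom{n}{2}$, and $\binom{n}{2}\equiv\tfrac n2\pmod n$. The crucial point is that this congruence is \emph{identical} for $k=1$ and $k=3$, so the design of $S$ must arrange that some subset of $S$ with displacement sum $\tfrac n2$ extends to a triplex, while no subset of $S$ with displacement sum $\tfrac n2$ can lie inside a transversal. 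Exhibiting the triplex, once $S$ is chosen, should be routine: take a good subset of $S$, complete it to a $3$-plex using the fact that $B_n$ with a few cells deleted still decomposes cleanly into near-transversals, and take the complement as the partner $(n-3)$-plex.

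To cover all even $n>4$ I would split by the residue of $n$ modulo $6$. For $n=6m$ with $m\ge 2$ one can invoke the general construction (a latin square of order $2km$ with a $k$-plex but no $k'$-plex for odd $k'<k$) specialised to $k=3$; the remaining orders, namely $n=6$ and $n\equiv 2,4\pmod 6$, are handled by explicit members of the family $L_n$ above, with the analysis of $S$ carried out uniformly in $n$ within each residue class, and the smallest order $n=6$ verified directly by displaying the square, a triplex inside it, and a short argument that it has no transversal.

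The main obstacle is unquestionably the ``no transversal'' half. Because the displacement congruence cannot by itself separate triplexes from transversals, the proof cannot rest on a parity count alone: it must combine that count with a case analysis of how a putative transversal can meet the small modified region $S$ --- a transversal uses each affected row and column exactly once --- and rule out every such pattern, either because its displacement sum is incongruent to $\tfrac n2$ or because it forces a repeated symbol elsewhere in the square. Choosing $S$ so that this case analysis closes, ideally with a single description valid for all $n$ in each residue class, and checking that none of the auxiliary squares used for the exceptional orders slips in an unwanted transversal, is where the real work lies; everything else is bookkeeping.
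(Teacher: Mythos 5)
Your outline is the paper's strategy: the displacement count you describe is exactly the $\Delta$-invariant (Lemma~\ref{l:Delta}), the plan of perturbing $B_n$ by a few intercalate-style trades is exactly what the paper does, and invoking Theorem~\ref{t:kk2} with $k=3$ for orders divisible by $6$ is legitimate. But the proposal stops precisely where the proof starts, and two essential ingredients are missing rather than merely deferred. First, the existence of the triplex is not ``routine bookkeeping'': the paper has to exhibit, inside $B_n$, an explicit set $J$ that already meets every column and every symbol exactly three times and every row three times except for a surplus in one row and a deficit in another, and then design latin trades that move exactly the surplus entries of $J$ into the deficient row without disturbing the rest of $J$. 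Finding such a $J$ and compatible trades (different ones for $n\equiv 0\pmod 4$, $n\equiv 10\pmod{12}$ and $n\equiv 2\pmod{12}$) is the bulk of the work, and nothing in your sketch produces it; the claim that $B_n$ minus a few cells ``decomposes cleanly into near-transversals'' is not a substitute, since $B_n$ of even order has no near-transversals to speak of in the needed sense.

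Second, your mechanism for excluding transversals is not the one that works. You correctly observe that the congruence $\sum d\equiv n/2\pmod n$ is the same for $k=1$ and $k=3$, but you then propose to close the gap by a case analysis on repeated symbols. The actual resolution is quantitative, not combinatorial: all nonzero displacements are confined to two or three rows, and because $\Delta_1$ is taken as the residue of \emph{least absolute value}, each such displacement has magnitude at most roughly $n/4$ (or $2m\approx n/6$). A transversal takes only one cell from each affected row, so $\bigl\lvert\sum\Delta_1\bigr\rvert$ is forced below $n/2$ and hence cannot be $\equiv n/2\pmod n$; a triplex takes three cells per row and can reach the required sum. Without this ``bounded rows times bounded magnitude'' estimate your case analysis has no reason to terminate. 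Finally, the general constructions only apply for $m$ sufficiently large ($m\ge 5$ or $m\ge 6$ in the relevant residue classes), so ten sporadic orders ($n\in\{10,14,18,22,26,34,38,46,50,62\}$) need separately exhibited squares and triplexes; a single uniform description per residue class, as you propose, does not cover them.
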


It is clear that this conjecture cannot extend to $n=4$, since the
complement of a transversal is always an $(n-1)$-plex. Part of the
motivation for studying plexes comes from their use in creating
orthogonal partitions in experiment design.  Basic existence questions
for plexes seem to be very difficult. It has been conjectured in
\cite{origplex} that every latin square of order $n$ has a $k$-plex
for every even $k\le n$. There seems to be more diversity regarding
existence of odd plexes, and our result adds to the
possibilities that are known to occur.

The following lemma will be crucial to our work. It is one variant of
a result known as the Delta lemma which has been employed in several
papers including \cite{EW08,EW11}. See \cite{transurv} for a
discussion of other applications. 

\begin{lemma}\label{l:Delta}
Let $L$ be a latin square of even order $n$ indexed by $N_n$. 
Suppose that $m$ is an odd divisor of $n$.
We define a function $\Delta_m$ on the entries of $L$ by
specifying that $\Delta_m(r,c,s)$ is the 
integer of least absolute value that satisfies
\begin{equation*}\label{e:deltadef}
\Delta_m(r,c,s)
\equiv \left\lfloor\frac{s}{m}\right\rfloor
-\left\lfloor\frac{r}{m}\right\rfloor
-\left\lfloor\frac{c}{m}\right\rfloor\mod \frac nm.
\end{equation*}
Let $k$ be an odd positive integer.
If $K$ is a $k$-plex of $L$ then, 
\begin{equation*}
\sum_{(r,c,s)\in K}\Delta_m(r,c,s)\equiv
\frac{n}{2m}\mod\frac nm.
\end{equation*}
\end{lemma}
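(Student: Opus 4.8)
The plan is to reduce the claimed congruence to an elementary count that uses only the defining property of a $k$-plex: that $K$ meets every row, every column, and every symbol exactly $k$ times. Write $q=n/m$, and note at the outset that $q$ is even, since $n$ is even and $m$ is odd; this is the one place the hypothesis really bites. By definition $\Delta_m(r,c,s)\equiv\lfloor s/m\rfloor-\lfloor r/m\rfloor-\lfloor c/m\rfloor\pmod q$ for every entry, and since the final conclusion is itself a congruence modulo $q$, it suffices to evaluate
$$\sum_{(r,c,s)\in K}\bigl(\lfloor s/m\rfloor-\lfloor r/m\rfloor-\lfloor c/m\rfloor\bigr)\pmod q.$$
In particular the choice between $q/2$ and $-q/2$ that can arise when $\Delta_m$ is "the integer of least absolute value" is harmless, because these are congruent modulo $q$.

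First I would split this sum into its three parts. Because $K$ contains exactly $k$ entries in each row, the multiset of row‑coordinates occurring in $K$ is precisely $k$ copies of $N_n$; likewise for the column‑coordinates and for the symbols. Hence each of $\sum_{(r,c,s)\in K}\lfloor r/m\rfloor$, $\sum_{(r,c,s)\in K}\lfloor c/m\rfloor$ and $\sum_{(r,c,s)\in K}\lfloor s/m\rfloor$ equals $kS$, where $S:=\sum_{i=0}^{n-1}\lfloor i/m\rfloor$. The displayed sum therefore collapses to $kS-kS-kS=-kS$, and the problem becomes to show $-kS\equiv q/2\pmod q$.

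Then I would compute $S$ modulo $q$. Writing $i=jm+t$ with $0\le j<q$ and $0\le t<m$ gives $\lfloor i/m\rfloor=j$, so $S=m\sum_{j=0}^{q-1}j=m\,q(q-1)/2$. Since $q$ is even, $q(q-1)/2=(q/2)(q-1)\equiv-q/2\equiv q/2\pmod q$; and since $m$ is odd, multiplying by $m$ changes nothing modulo $q$, so $S\equiv q/2\pmod q$. Finally, as $k$ is odd, $kS\equiv q/2\pmod q$ as well, whence $-kS\equiv-q/2\equiv q/2=n/(2m)\pmod q$, which is exactly the assertion of the lemma.

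This is essentially a bookkeeping argument, so there is no single hard step. The only things to watch are the parity inputs — the evenness of $q=n/m$ and the oddness of both $k$ and $m$, which between them let one discard the factors $m$ and $k$ modulo $q$ — and the threefold cancellation $kS-kS-kS=-kS$, where a sign slip would be the likeliest source of error.
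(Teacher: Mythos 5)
Your proof is correct and follows essentially the same route as the paper's: split the sum over the $k$-plex into the row, column, and symbol contributions, each equal to $k\sum_{i=0}^{n-1}\lfloor i/m\rfloor$, and then evaluate $-k\,m\,q(q-1)/2$ modulo $q=n/m$ using the parity of $q$, $k$ and $m$. The only (minor) addition is your explicit remark that working with any representative of the residue class of $\Delta_m$ is harmless, which the paper leaves implicit.
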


\begin{proof} By definition,
\begin{align*}
\sum_{(r,c,s)\in K}\Delta_m(r,c,s)
&\equiv
k\sum_{s=0}^{n-1}\left\lfloor\frac{s}{m}\right\rfloor
-k\sum_{r=0}^{n-1}\left\lfloor\frac{r}{m}\right\rfloor
-k\sum_{c=0}^{n-1}\left\lfloor\frac{c}{m}\right\rfloor\\
&\equiv
-km\sum_{i=0}^{n/m-1}i
=-km\left(\frac{n}m-1\right)\frac{n}{2m}\\
&\equiv\frac{n}{2m}\mod\frac nm,
\end{align*}
since $km$ is odd and $n/m$ is even.
\end{proof}

Most applications simply use $m=1$, but we will need the more general
version in the next section. Also, the requirement that $\Delta_m$
has the least absolute value in its residue class will be vital in 
inequalities throughout the paper.

The number of transversals is invariant within a species.
We call a latin square {\it transversal-free} if it possesses no transversals.
Define $B_n$ to be the addition table for the integers modulo $n$.
It is immediate from \lref{l:Delta} (using $m=1$)
that $B_n$ is transversal-free when
$n$ is even. In turns out that many latin squares that resemble $B_n$
are also transversal-free. We will use two results on this theme.
One is a new result that we prove in the next section.
The other is the following classical result due to Maillet~\cite{Mai94},
which was generalised from transversals to all odd plexes in \cite{origplex}.


\begin{lemma}\label{l:steptype}
Let $n=bm$ where $b$ is even and $m$ is odd.
Let $L=[L_{ij}]$ be a latin square indexed by $N_n$. 
If 
\begin{equation}\label{e:steptype}
\lfloor L_{ij}/m\rfloor\equiv\lfloor i/m\rfloor+\lfloor j/m\rfloor\mod b
\end{equation} 
for all $i,j\in N_n$, then $L$ has no odd plexes.
\end{lemma}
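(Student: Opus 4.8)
The plan is to reduce Lemma~\ref{l:steptype} to Lemma~\ref{l:Delta}. The hypothesis \eqref{e:steptype} says precisely that for every entry $(i,j,L_{ij})$ of $L$, the quantity $\lfloor L_{ij}/m\rfloor-\lfloor i/m\rfloor-\lfloor j/m\rfloor$ is congruent to $0$ modulo $b=n/m$. Since $\Delta_m(r,c,s)$ is by definition the integer of least absolute value in the residue class of $\lfloor s/m\rfloor-\lfloor r/m\rfloor-\lfloor c/m\rfloor$ modulo $n/m$, the hypothesis forces $\Delta_m(i,j,L_{ij})=0$ for every entry of $L$. (This is the only place the ``least absolute value'' clause is used: it pins down $\Delta_m$ to be exactly $0$, not merely $\equiv 0$.)

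From here the argument is immediate. Let $k$ be any odd positive integer and suppose, for contradiction, that $K$ is a $k$-plex of $L$. Every triple $(r,c,s)\in K$ is an entry of $L$, so $\Delta_m(r,c,s)=0$, and therefore
\begin{equation*}
\sum_{(r,c,s)\in K}\Delta_m(r,c,s)=0.
\end{equation*}
On the other hand, Lemma~\ref{l:Delta} (applicable since $n$ is even and $m$ is an odd divisor of $n$) gives
\begin{equation*}
\sum_{(r,c,s)\in K}\Delta_m(r,c,s)\equiv\frac{n}{2m}\pmod{\frac nm}.
\end{equation*}
Since $n/m=b$ is even, $n/(2m)$ is a nonzero residue modulo $n/m$, so $0\not\equiv n/(2m)\pmod{n/m}$, a contradiction. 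Hence $L$ has no $k$-plex for any odd $k$, i.e.\ $L$ has no odd plexes.

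I do not expect a genuine obstacle here: the work is entirely in recognising that \eqref{e:steptype} is engineered to make $\Delta_m$ vanish identically on $L$, after which Lemma~\ref{l:Delta} does everything. The only point requiring a moment's care is confirming that the modulus matches --- that the $b$ appearing in \eqref{e:steptype} is the same as the $n/m$ appearing in the definition of $\Delta_m$ and in Lemma~\ref{l:Delta} --- which is clear since $n=bm$. One might also remark that this recovers the classical transversal-free statement for $B_n$ by taking $m=1$, $b=n$, and $L_{ij}=i+j$, though that is not needed for the proof.
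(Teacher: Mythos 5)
Your proof is correct and is exactly the argument the paper has in mind: the paper simply notes that the lemma is ``an immediate consequence'' of Lemma~\ref{l:Delta}, and your write-up spells out that immediate consequence (hypothesis \eqref{e:steptype} forces $\Delta_m\equiv 0$ on every entry, so the sum over any odd plex is $0$, contradicting $n/(2m)\not\equiv 0\pmod{n/m}$). No issues.
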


\lref{l:steptype} is an
immediate consequence of \lref{l:Delta}. 
Latin squares that satisfy \eref{e:steptype} for all $i,j\in N_n$ are
sometimes said to be of {\em``step-type''}. 
Part of the original
motivation for \cjref{cj:triplexnotrans} was to find a family of latin
squares that are transversal-free but are structurally different to
step-type latin squares. That goal was achieved in \cite{EW08}, but
without proving the original conjecture. It was shown that there are
latin squares that have $k$-plexes for some odd $k$ but not for any small 
odd $k$. By proving \cjref{cj:triplexnotrans}, we demonstrate yet
another possible structure.

The other background result that we need is the following,
from \cite[p.186]{vLW01}:

\begin{lemma}\label{l:extLR}
Let $R$ be a $k\times n$ latin rectangle. The number of $n\times n$
latin squares obtained by adding rows to $R$ is at least 
\[
\prod_{i=k}^{n-1}n!(1-i/n)^n=n!^{\,n-k}(n-k)!^{\,n}/n^{n(n-k)}.
\]
\end{lemma}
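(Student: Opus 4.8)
The plan is to construct the Latin square from $R$ by adjoining one row at a time, and at each stage to bound the number of choices from below by the Van der Waerden permanent inequality (the Egorychev--Falikman theorem), which asserts that any $n\times n$ doubly stochastic matrix has permanent at least $n!/n^n$.

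First I would set up the counting step. Suppose $R$ has already been extended to an $i\times n$ latin rectangle $R'$ with $k\le i<n$. Adjoining a valid $(i+1)$-st row means choosing, for each column $j\in N_n$, a symbol $\sigma(j)$ that does not yet occur in column $j$ of $R'$, with $\sigma$ a permutation of $N_n$. Encode the ``still available'' positions by the $n\times n$ $0$-$1$ matrix $A=A(R')$ with $A_{js}=1$ precisely when symbol $s$ is absent from column $j$ of $R'$. Each column of $R'$ contains exactly $i$ distinct symbols, so every row of $A$ has exactly $n-i$ ones; likewise each symbol occurs in exactly $i$ columns of $R'$, so every column of $A$ has exactly $n-i$ ones. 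The number of admissible $(i+1)$-st rows is exactly the permanent $\operatorname{per}A$.

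Next I would apply the permanent bound. Since all row and column sums of $A$ equal $n-i$, the matrix $(n-i)^{-1}A$ is doubly stochastic, so
\[
\operatorname{per}A=(n-i)^n\operatorname{per}\big((n-i)^{-1}A\big)\ge (n-i)^n\,\frac{n!}{n^n}=n!\,(1-i/n)^n.
\]
This bound is uniform over all $i\times n$ rectangles reachable from $R$, so multiplying over $i=k,k+1,\dots,n-1$ shows the number of latin squares obtained from $R$ by adding rows is at least $\prod_{i=k}^{n-1}n!\,(1-i/n)^n$. A routine rearrangement (write $1-i/n=(n-i)/n$ and use $\prod_{i=k}^{n-1}(n-i)=(n-k)!$) turns this into $n!^{\,n-k}(n-k)!^{\,n}/n^{n(n-k)}$, giving the claimed equality of the two expressions.

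The only non-elementary ingredient is the Van der Waerden permanent bound; everything else is bookkeeping. If one wanted to avoid invoking Egorychev--Falikman, the hard part would be reproving a suitable lower bound for permanents of matrices with constant line sums, but for the purposes of this lemma it is cleanest simply to cite it.
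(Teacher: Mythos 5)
Your proof is correct and is essentially the standard argument: the paper states this lemma without proof, citing van Lint and Wilson, and the proof given there is exactly the one you describe — reduce each row-extension step to a permanent of a $0$-$1$ matrix with constant line sums $n-i$ and apply the Egorychev--Falikman (Van der Waerden) bound. All of your bookkeeping (row/column sums of $A$, the telescoping product, and the identity $\prod_{i=k}^{n-1}(n-i)=(n-k)!$) checks out.
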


\section{Number of latin squares with no transversals}\label{s:numnotrans}

It was famously conjectured by Ryser (see \cite{transurv} for 
the history of this conjecture) that all latin squares
of odd order have transversals. Our aim in this section is to show
that there are a great many different species of latin squares of even
order that have no transversals.

\begin{theorem}\label{t:manystepstype}
Let $n=2^am$ for positive integers $a$ and $m$, where $m$ is odd.
There are at least $(m/e^2)^{n^2}$ latin squares of order $n$ that
have no odd plexes.
\end{theorem}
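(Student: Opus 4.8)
The plan is to build many step-type latin squares by exploiting the freedom in how the $m\times m$ "blocks" are filled, and then count. Think of $N_n$ as partitioned into $n/m$ consecutive intervals of length $m$; write $i = m\lfloor i/m\rfloor + (i \bmod m)$, and similarly decompose columns and symbols. A latin square $L$ of step-type relative to $m$ is then governed on the coarse scale by $B_{n/m}$ (via the condition $\lfloor L_{ij}/m\rfloor \equiv \lfloor i/m\rfloor + \lfloor j/m\rfloor \bmod n/m$, noting $b = n/m = 2^a$ is even), while on the fine scale inside each of the $(n/m)^2$ block positions we are free to place an arbitrary $m\times m$ latin square on the symbol set $N_m$ (translated appropriately), as long as the whole array stays latin. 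By \lref{l:steptype}, every such $L$ has no odd plexes, so it suffices to lower-bound the number of these squares.

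First I would make the counting concrete. Fix the block structure: for block-row index $I$ and block-column index $J$ (each ranging over $N_{n/m}$), the entries of $L$ in that block have symbols lying in the interval indexed by $I+J \bmod n/m$; after subtracting the constant $m(I+J \bmod n/m)$, we need an $m\times m$ latin square $M_{I,J}$ on $N_m$. The latin constraints across blocks are automatically compatible: within a fixed block-row $I$, as $J$ ranges over $N_{n/m}$ the target symbol intervals $I+J$ are all distinct, so the row constraint on $L$ decomposes into independent row constraints on the $M_{I,J}$; the same holds column-wise. Hence the $M_{I,J}$ can be chosen \emph{completely independently} — one arbitrary latin square of order $m$ in each of the $(n/m)^2$ block positions. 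Therefore the number of step-type squares is at least $\bigl(L(m)\bigr)^{(n/m)^2}$, where $L(m)$ is the number of latin squares of order $m$.

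Next I would invoke a standard lower bound on $L(m)$. Van der Waerden's formula (the $k=0$ case of \lref{l:extLR} with $n$ replaced by $m$) gives $L(m)\ge m!^{\,m}/m^{m^2}$, and by Stirling $m! \ge (m/e)^m$, so $L(m) \ge (m/e)^{m^2}\!/m^{m\cdot 0}\cdots$ — more precisely $L(m)\ge \bigl((m/e)^m\bigr)^m/m^{m^2} \cdot m^{m^2} \cdot m^{-m^2}$; carrying the exponents through yields $L(m)\ge (m/e^2)^{m^2}$ up to lower-order factors. Raising to the power $(n/m)^2$ gives at least $(m/e^2)^{n^2}$ step-type latin squares of order $n$, all free of odd plexes, as claimed.

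The main obstacle is the bookkeeping in the last step: getting the constant in the base of the exponent to come out as exactly $m/e^2$ rather than something weaker like $m/e^{2+o(1)}$ or $(m/e^2)^{n^2(1-o(1))}$. I would need to track the $m^{m^2}$ denominator in van der Waerden's bound against the $m!^{\,m}$ numerator carefully — the point being that $m!^{\,m}/m^{m^2} \ge (m/e)^{m^2}\cdot e^{-m^2}$ needs the full strength of $m!\ge (m/e)^m$ and not a sloppier version — and then check that $\bigl((m/e^2)^{m^2}\bigr)^{(n/m)^2} = (m/e^2)^{n^2}$ exactly. Everything else (the independence of the block fillings, the appeal to \lref{l:steptype}) is structural and essentially forced once the block decomposition is set up correctly; the one subtlety to state clearly is why distinct $J$ in a fixed block-row really do hit distinct symbol intervals, which is just that $J\mapsto I+J$ is a bijection on $N_{n/m}$.
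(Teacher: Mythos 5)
Your approach is exactly the paper's: tile the $n\times n$ array with $(n/m)^2$ independent latin subsquares of order $m$ arranged in the step pattern, invoke \lref{l:steptype} to rule out odd plexes, and lower-bound the number of choices per block via the $k=0$ case of \lref{l:extLR} plus Stirling. The structural part (the blocks can be filled independently because $J\mapsto I+J$ permutes the symbol intervals within each block-row, and likewise for block-columns) is correct and is exactly what the paper asserts.

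The one concrete error is in your quotation of \lref{l:extLR}. With $k=0$ and order $m$ it gives
\[
L(m)\ \ge\ \prod_{i=0}^{m-1} m!\,(1-i/m)^m \;=\; m!^{\,m}\cdot\frac{m!^{\,m}}{m^{m^2}}\;=\;\frac{m!^{\,2m}}{m^{m^2}},
\]
not $m!^{\,m}/m^{m^2}$ as you wrote: you dropped the factor $m!^{\,m}$ coming from the product of the $m!$ terms. This matters, because $m!^{\,m}/m^{m^2}\ge (m/e)^{m^2}/m^{m^2}=e^{-m^2}$, which is far smaller than the required $(m/e^2)^{m^2}=m^{m^2}e^{-2m^2}$; indeed your ``more precisely'' expression also evaluates to $e^{-m^2}$, so as written the per-block bound does not deliver the stated constant even ``up to lower-order factors''. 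With the correct exponent the bookkeeping you flagged as the main obstacle disappears entirely:
\[
\frac{m!^{\,2m}}{m^{m^2}}\ \ge\ \frac{(m/e)^{2m^2}}{m^{m^2}}\ =\ \Bigl(\frac{m}{e^2}\Bigr)^{m^2},
\]
and raising this to the power $(n/m)^2=2^{2a}$ gives $(m/e^2)^{n^2}$ exactly, which is precisely the paper's calculation.
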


\begin{proof}
By \lref{l:steptype}, we can construct an order $n$ latin square with
no odd plexes by patching together $2^{2a}$ latin subsquares of order
$m$. For each subsquare we have at least $m!^{\,2m}/m^{m^2}$ choices,
by \lref{l:extLR}, and these choices can be made independently. The
result now follows from Stirling's approximation, given that
$m!>(m/e)^m$ for all $m\ge1$.
\end{proof}

\begin{corollary}
For $m\iwb$ with fixed $a\ge1$, the number of species of transversal-free
latin squares of order $n=2^am$ is at least $n^{n^2(1-o(1))}$.
\end{corollary}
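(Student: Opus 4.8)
The plan is to deduce the corollary from \tref{t:manystepstype} by dividing the number of transversal-free latin squares by an upper bound on the size of a species. First I would fix $a\ge1$ and take $n=2^am$, so that $m=n/2^a$ and in particular $\log m=\log n-a\log 2=(1-o(1))\log n$ as $m\to\infty$. \tref{t:manystepstype} gives at least $(m/e^2)^{n^2}$ latin squares of order $n$ with no odd plexes; since a transversal is a $1$-plex and $1$ is odd, every such square is transversal-free.

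Next I would bound the number of species. Each species is an orbit under the action of $\sym_n\wr\sym_3$, a group of order $(n!)^3\cdot 6$, so each species contains at most $6(n!)^3$ latin squares. Hence the number of species of transversal-free latin squares of order $n$ is at least
\[
\frac{(m/e^2)^{n^2}}{6\,(n!)^3}.
\]
Taking logarithms (base $n$, say), the numerator contributes $n^2\log_n(m/e^2)=n^2\bigl(\log_n m-2\log_n e\bigr)$, while $\log_n\bigl(6(n!)^3\bigr)=O(n\log n)$, which is $o(n^2)$. Since $\log_n m=(\log m)/(\log n)=1-o(1)$ and $\log_n e=o(1)$, the numerator's exponent is $n^2(1-o(1))$ and the denominator is absorbed into the $o(1)$ term. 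Therefore the number of species is at least $n^{n^2(1-o(1))}$, as claimed.

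This argument is almost entirely routine; the only point requiring a little care is making sure the $o(1)$ is uniform in the right variable. The subtlety is that the limit is taken as $m\to\infty$ with $a$ fixed, not as $n\to\infty$ freely, so one should check that $(\log m)/(\log n)\to1$ under exactly this regime (it does, since $\log n=\log m+a\log2$ and $a$ is constant) and that the $\log n$-sized correction from $6(n!)^3$ is genuinely lower order than $n^2\log n$ (it is, being $O(n\log n)$). No genuine obstacle arises; the estimate $m!>(m/e)^m$ and Stirling's formula have already done the real work inside \tref{t:manystepstype}, and the corollary is just the bookkeeping that converts a count of squares into a count of species.
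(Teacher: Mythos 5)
Your proof is correct and follows essentially the same route as the paper: apply \tref{t:manystepstype}, divide by the species-size bound $6(n!)^3=n^{O(n)}$, and absorb the constants (the paper writes $m/e^2=cn$ with $c=2^{-a}e^{-2}$, which is the same bookkeeping as your $\log_n m=1-o(1)$ computation). Your extra care about the limit regime ($m\to\infty$ with $a$ fixed) is a nice touch but does not change the argument.
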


\begin{proof}
The number of transversal-free latin squares is at least
$(cn)^{n^2}$ for the constant $c=2^{-a}e^{-2}>0$.
The result now follows, since the number of
latin squares in each species is at most $6(n!)^3=n^{O(n)}$.
\end{proof}

Of course, \tref{t:manystepstype} does not tell us much for orders
that are powers of 2. This is unavoidable because \lref{l:steptype} only
applies to one species of such an order, namely the species 
containing $B_n$.  Our next
result will allow us to show that there are many species of
transversal-free latin squares for {\em all} even orders.

\begin{theorem}\label{t:botrows}
Let $n=bm$ where $b$ is even and $m$ is odd. Let $r$ be a nonnegative
integer and $k$ an odd positive integer. 
Suppose $L$ is any latin square of order $n$ indexed by $N_n$,
which satisfies \eref{e:steptype} for all $j\in N_n$ and $0\le i<n-mr$.
If $km^2r(r-1)<n$ then $L$ has no $k$-plexes.
\end{theorem}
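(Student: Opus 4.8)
The plan is to use the Delta lemma (\lref{l:Delta}) with the given odd divisor $m$ of $n$, and to show that the contribution to $\sum_{(r,c,s)\in K}\Delta_m(r,c,s)$ coming from entries in rows $i$ with $i<n-mr$ is forced to be a multiple of $n/m$, so that the whole sum is determined (modulo $n/m$) by the entries in the final $mr$ rows, which are too few to let the sum reach the obstruction value $n/(2m)$ demanded by \lref{l:Delta}. First I would observe that \eref{e:steptype} holding for all $j\in N_n$ and all $0\le i<n-mr$ says exactly that $\Delta_m(i,j,L_{ij})\equiv 0\pmod{n/m}$ for every such entry; since $\Delta_m$ takes the value of least absolute value in its residue class, this means $\Delta_m(i,j,L_{ij})=0$ for every entry in those first $n-mr$ rows. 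Hence for any $k$-plex $K$, only the $km r$ entries of $K$ lying in the last $mr$ rows can contribute to the sum.

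Next I would bound the absolute value of $\Delta_m(i,j,L_{ij})$ for an arbitrary entry. By definition $\Delta_m(i,j,L_{ij})$ is the representative of least absolute value of $\lfloor s/m\rfloor-\lfloor i/m\rfloor-\lfloor j/m\rfloor$ modulo $n/m$, so it certainly satisfies $|\Delta_m(i,j,L_{ij})|\le (n/m-1)/2<n/(2m)$ in general; but for the entries in the last $mr$ rows I want a much better bound. The key point is that $\Delta_m$ is "almost additive": writing $\delta(x)=\lfloor x/m\rfloor$, one has $\delta(s)-\delta(i)-\delta(j)$, and the rows contributing lie in the block $i\ge n-mr$, i.e.\ $\lfloor i/m\rfloor\ge n/m-r$. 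I would argue that for each such row the value of $\lfloor L_{ij}/m\rfloor-\lfloor i/m\rfloor-\lfloor j/m\rfloor$, reduced to least absolute value mod $n/m$, is at most $O(r)$ in absolute value — roughly, only the top $r$ "row-blocks" are in play, and each shifts the nominal step-type value by a bounded amount, so the deviation from a multiple of $n/m$ is at most about $m r$ (and summing over the $kmr$ relevant entries gives a total of at most about $km^2r^2$, or more precisely $km^2r(r-1)$ after a careful count that exploits that distinct contributing rows give a spread of values rather than the worst case each). Combining: $\big|\sum_{(r,c,s)\in K}\Delta_m(r,c,s)\big|\le km^2 r(r-1)<n=m\cdot(n/m)$, so the sum lies strictly between $-n/m$ and $n/m$. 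The only multiple of $n/m$ in that range is $0$, which is not congruent to $n/(2m)\pmod{n/m}$ (as $n/m$ is even and $n/(2m)\ne 0$), contradicting \lref{l:Delta}. Therefore $L$ has no $k$-plex.

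The main obstacle I anticipate is getting the bound on $\big|\sum_{(r,c,s)\in K}\Delta_m\big|$ sharp enough to land the constant $km^2r(r-1)$ rather than something like $km^2r^2$ or $km^2r(2r-1)$. A crude "each of the $kmr$ entries contributes at most $mr$" bound gives $km^2r^2$, which is not quite the stated hypothesis. To get the claimed constant I would set up the count by row-block: the contributing rows occupy the top $r$ blocks of $m$ consecutive rows, the block at level $n/m-t$ (for $1\le t\le r$) forces a deviation of absolute value at most $mt$ in the relevant term, and the $k$-plex has exactly $m$ rows in each such block contributing... actually one must be careful that a $k$-plex selects $k$ entries per row, so $km$ entries per block; summing $\sum_{t=1}^{r} km\cdot (\text{something like }m(t-1))$ telescopes to $km^2\cdot r(r-1)/2$, and a factor of $2$ is recovered because $\Delta_m$ is the signed least-absolute-value representative (so a deviation of nominal size $d$ contributes at most $\sim d/1$ but the positive and partial cancellations, or rather the precise combinatorics of which blocks of symbols appear, halve the naive estimate). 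Pinning down exactly where that factor of two comes from — most plausibly from pairing each contributing row-block with its image under $i\mapsto i+\,$(something), or from the fact that the symbols in $K$ restricted to those rows also spread over several blocks — is the delicate bookkeeping step, and it is where I would spend the bulk of the argument.
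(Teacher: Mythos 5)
Your overall strategy is the same as the paper's: apply \lref{l:Delta} with the divisor $m$, observe that \eref{e:steptype} forces $\Delta_m=0$ on every entry in the first $n-mr$ rows, and then bound the contribution of the $kmr$ entries of $K$ lying in the last $mr$ rows. But the heart of the argument --- the actual bound on those contributions --- is missing, and you say as much. The mechanism is not that ``the top $r$ row-blocks shift the nominal step-type value by a bounded amount'': nothing whatsoever is assumed about the last $mr$ rows beyond the latin property, so a priori an entry there could have $|\Delta_m|$ as large as $b/2$ where $b=n/m$. The constraint comes from the \emph{columns}. Since the first $n-mr$ cells of column $c$ obey \eref{e:steptype}, they use up exactly the symbol-blocks $\lfloor c/m\rfloor,\lfloor c/m\rfloor+1,\dots,\lfloor c/m\rfloor+b-r-1\pmod b$ (each block of $m$ rows fills one block of $m$ symbols completely), so any symbol $s$ in a cell $(i,c)$ with $i\ge n-mr$ must satisfy $\lfloor s/m\rfloor-\lfloor c/m\rfloor\in\{b-r,\dots,b-1\}\pmod b$, whence $b-r-\lfloor i/m\rfloor\le\Delta_m(i,c,s)\le b-1-\lfloor i/m\rfloor$. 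For the row-block $\lfloor i/m\rfloor=b-t$ this is the window $[t-r,\,t-1]$ (no factor of $m$ per entry, contrary to your estimates), and summing one chosen entry per row over the last $mr$ rows gives a total lying in $[-mr(r-1)/2,\,mr(r-1)/2]$; multiplying by $k$ yields the bound $kmr(r-1)/2$. Neither of your two guesses for where the missing factor comes from (pairing row-blocks under a translation, or the symbols of $K$ spreading over blocks) is this column argument, so the delicate step you defer is a genuine gap rather than bookkeeping.

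There is also an error in your endgame. To contradict \lref{l:Delta} you must show $\bigl|\sum_{(r,c,s)\in K}\Delta_m(r,c,s)\bigr|<n/(2m)$, since $n/(2m)$ is the least absolute value of anything congruent to $n/(2m)$ modulo $n/m$; the sum is not asserted to be a multiple of $n/m$, so ``the only multiple of $n/m$ in that range is $0$'' is not the right dichotomy, and in any case $km^2r(r-1)<n$ bounds your claimed total by $n$, not by $n/m$. Even granting your (unproven) estimate $km^2r(r-1)$ for the absolute sum, it is a factor of $2m$ too large to beat the threshold $n/(2m)$. The correct total bound $kmr(r-1)/2$ satisfies $kmr(r-1)/2<n/(2m)$ exactly when $km^2r(r-1)<n$, which is how the hypothesis enters.
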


\begin{proof}
Suppose that $K$ is a $k$-plex of $L$.
For $0\le i<n$, let $\{e_{i,a}:0\le a<k\}$ be the set of 
entries in $K$ from the $i$-th row of $L$.
By assumption, $\Delta_m(e_{i,a})=0$ whenever $0\le i<n-mr$. 
Now consider some $e_{i,a}=(i,c,s)$ where $i\ge n-mr$. 
Given that $s$ cannot match any of the symbols which are used in the 
first $n-mr$ rows of column $c$, we know that
$b-r-\lfloor i/m\rfloor\le \Delta_m(e_{i,a}) \le b-1-\lfloor i/m\rfloor$.
It follows that 
\[
\left|\sum_{i=0}^{n-1}\sum_{a=0}^{k-1}\Delta_m(e_{i,a})\right|
\le\sum_{a=0}^{k-1}\left|\sum_{i=n-mr}^{n-1}\Delta_m(e_{i,a})\right|
\le km\sum_{j=0}^{r-1}j=kmr(r-1)/2.
\]
The result now follows from \lref{l:Delta}.
\end{proof}

In particular, when a step-type latin square of order $n$ is
transversal-free, this property is quite robust in the sense that no
transversal will be introduced by arbitrarily changing any (roughly)
$\sqrt{n}$ consecutive rows. Putting $k=m=1$ in \tref{t:botrows}, we
see that:

\begin{corollary}
For even $n$ there are no transversals in 
any latin square which agrees with $B_n$ outside of some set
of $\lfloor\sqrt{n}\rfloor$ consecutive rows.
\end{corollary}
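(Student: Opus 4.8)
The plan is to obtain the corollary as the special case $k=m=1$, $r=\lfloor\sqrt n\rfloor$ of \tref{t:botrows}, after first normalising the location of the altered rows.

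First I would record what condition \eref{e:steptype} imposes when $m=1$. Since $0\le L_{ij}<n$, the congruence $\lfloor L_{ij}/1\rfloor\equiv\lfloor i/1\rfloor+\lfloor j/1\rfloor\pmod n$ is equivalent to $L_{ij}=(i+j)\bmod n$, which says precisely that the $(i,j)$ entry of $L$ equals the corresponding entry of $B_n$. Hence, for a given $r$, a latin square $L$ of even order $n$ satisfies the hypothesis of \tref{t:botrows} with $b=n$ and $m=1$ exactly when $L$ agrees with $B_n$ on every row $i$ with $0\le i<n-r$, i.e.\ when $L$ can differ from $B_n$ only inside the last $r$ rows, namely rows $n-r,\dots,n-1$.

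Next I would reduce an arbitrary block of consecutive rows to this situation. For an integer $t$, let $\sigma$ be the map on entries sending $(r,c,s)$ to $\big((r+t)\bmod n,\,c,\,(s+t)\bmod n\big)$. Then $\sigma$ is a bijection on the latin squares of order $n$, it carries transversals to transversals (it permutes rows and symbols and fixes columns), and it fixes $B_n$ because the $B_n$-entry $(r,c,(r+c)\bmod n)$ maps to $\big((r+t)\bmod n,\,c,\,((r+t)+c)\bmod n\big)$. So if $L$ agrees with $B_n$ outside a set of $r$ consecutive rows $\{i_0,\dots,i_0+r-1\}\subseteq N_n$, then choosing $t\equiv n-i_0-r\pmod n$ makes $\sigma(L)$ agree with $B_n$ outside rows $n-r,\dots,n-1$, and $L$ has a transversal if and only if $\sigma(L)$ does. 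Thus we may assume the altered rows are the last $r$.

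Finally, apply \tref{t:botrows} with $k=m=1$ and $r=\lfloor\sqrt n\rfloor$. The hypothesis $km^2r(r-1)<n$ reduces to $r(r-1)<n$, which holds since $r(r-1)<r^2\le(\sqrt n)^2=n$ for $r\ge1$ (and trivially for $r=0$). Therefore $L$ has no $1$-plex, i.e.\ no transversal, which is the assertion. There is no real obstacle in this argument; the only point that warrants a line of verification is that the cyclic shift $\sigma$ is a transversal-preserving bijection fixing $B_n$, and that is immediate from $B_n$ being the Cayley table of $\Z_n$.
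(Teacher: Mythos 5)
Your proof is correct and takes the same route as the paper, which simply cites \tref{t:botrows} with $k=m=1$ and $r=\lfloor\sqrt{n}\rfloor$. The one thing you add that the paper leaves implicit is the cyclic-shift normalisation moving an arbitrary block of consecutive rows to the last $r$ rows; that step is needed for the corollary as stated and you handle it correctly.
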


\begin{corollary}
For even $n\iwb$, there are at least
$n^{n^{3/2}(1/2-o(1))}$ species of transversal-free latin squares of order $n$.
\end{corollary}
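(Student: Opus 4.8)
The plan is to combine Theorem~\ref{t:botrows} with the latin-rectangle counting estimate of Lemma~\ref{l:extLR}, exactly in the spirit of the proof of Theorem~\ref{t:manystepstype}, but now exploiting the freedom to alter a block of about $\sqrt{n}$ consecutive rows. First I would fix $m=1$, $b=n$, $k=1$, and choose $r=\lfloor\sqrt{n}\rfloor$ (or a slightly smaller value so that $r(r-1)<n$ holds with room to spare). Theorem~\ref{t:botrows} then says: any latin square whose first $n-r$ rows agree with $B_n$ — i.e.\ satisfy \eref{e:steptype} — has no transversal, regardless of what the last $r$ rows look like (as long as the whole array is still a latin square). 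So it suffices to count how many distinct species arise this way.

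The counting goes in two stages. First, the top $n-r$ rows of $B_n$ form an $(n-r)\times n$ latin rectangle $R$; by Lemma~\ref{l:extLR} with $k=n-r$, the number of latin squares of order $n$ extending $R$ is at least $n!^{\,r}(r!)^{\,n}/n^{nr}$. By Stirling this is $n^{nr(1-o(1))}$; since $r=\lfloor\sqrt n\rfloor$, that exponent is $n^{3/2}(1-o(1))$. Second, I need to pass from a count of latin squares to a count of species: each species contains at most $6(n!)^3=n^{O(n)}$ latin squares (the order of $\sym_n\wr\sym_3$), and $n^{O(n)}$ is negligible against $n^{n^{3/2}(1-o(1))}$, so dividing loses nothing asymptotically. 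Every one of these squares is transversal-free by Theorem~\ref{t:botrows}, so we obtain at least $n^{n^{3/2}(1-o(1))}$ species of transversal-free latin squares, which certainly beats the claimed $n^{n^{3/2}(1/2-o(1))}$.

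The factor of $1/2$ in the statement (rather than $1$) is a convenient slack, so there is nothing delicate to optimize there; I would just carry the cruder bound through. The one point needing a line of care is that Lemma~\ref{l:extLR} as quoted requires the top $k=n-r$ rows to form a genuine latin rectangle — which they do, being the first $n-r$ rows of $B_n$ — and that the extensions counted are genuinely pairwise distinct as arrays before we quotient by species; that is immediate since they differ in the bottom rows. The main (and really only) obstacle is purely bookkeeping: checking that $n!^{\,r}(r!)^{\,n}/n^{nr}=n^{n^{3/2}(1/2-o(1))}$ after taking logarithms, using $\log r!=r\log r-r+O(\log r)$ and $r=\sqrt n\,(1+o(1))$. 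I expect $\log\big(n!^{\,r}(r!)^{\,n}/n^{nr}\big)=r\,n\log n - rn\log r + O(rn) = \tfrac12 n^{3/2}\log n\,(1+o(1))$, which gives the bound with room to spare, after which dividing by $n^{O(n)}$ for the species count changes nothing.
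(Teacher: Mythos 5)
Your proposal is correct and is essentially identical to the paper's own proof: both apply \lref{l:extLR} to the first $n-s$ rows of $B_n$ with $s=\lfloor\sqrt n\rfloor$, invoke \tref{t:botrows} (with $k=m=1$, $r=s$) to conclude that every completion is transversal-free, and then divide by the $n^{O(n)}$ bound on the size of a species. One caveat: your middle paragraph asserts the completion count is $n^{nr(1-o(1))}=n^{n^{3/2}(1-o(1))}$ and calls the exponent $1/2$ ``convenient slack,'' which is a slip --- the dominant term of $\log\bigl(n!^{\,r}(r!)^{\,n}/n^{nr}\bigr)$ is $nr\log r=\tfrac12 n^{3/2}\log n$, so $1/2$ is exactly what this bound delivers, as your own final computation confirms; the corollary as stated is therefore still proved, but the stronger claim in that paragraph does not follow from this estimate.
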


\begin{proof}
Let $s=\lfloor\sqrt{n}\rfloor$.  By \lref{l:extLR} and Stirling's
approximation, there are at least
$(s!)^n(n!)^s/n^{sn}=n^{n^{3/2}(1/2-o(1))}$
ways to complete the first $n-s$ rows of
$B_n$ to a latin square.  Again, when we divide by
$n^{O(n)}$, the maximum number of latin squares in a species, this
factor gets absorbed in the error term.
\end{proof}

For $m=1$ and $r\le2$, \tref{t:botrows} is not useful since in this case
the only way to change $r$ consecutive rows is to permute them,
which does not change the species. Hence the smallest case when
\tref{t:botrows} is interesting is when $n=8$ and $r=3$. 
There are 264 latin squares that agree with $B_8$ in the first $5$
rows, and these fall in $9$ distinct transversal-free
species. Representatives of the 
$8$ species other than $B_8$ may be defined by specifying the 
non-zero values of the $\Delta_1$ function in the last three rows:
\begin{align*}
&
\left(\begin{array}{cccccccc}
\mtee&  2  &\mtee&  2  &\mtee&  2  &\mtee&  2  \\
\mtee&\mtee&\mtee&\mtee&\mtee&\mtee&\mtee&\mtee\\
\mtee& -2  &\mtee& -2  &\mtee& -2  &\mtee& -2  \\
\end{array}\right)
\left(\begin{array}{cccccccc}
\mtee&  2  &\mtee&  2  &\mtee&  2  &\mtee&  2  \\
\mtee&\mtee&\mtee&\mtee&\mtee&\mtee&  1  & -1  \\
\mtee& -2  &\mtee& -2  &\mtee& -2  & -1  & -1  \\
\end{array}\right)
\\
&
\left(\begin{array}{cccccccc}
\mtee&  2  &\mtee&  2  &\mtee&  2  &\mtee&  2  \\
\mtee&\mtee&  1  & -1  &\mtee&\mtee&  1  & -1  \\
\mtee& -2  & -1  & -1  &\mtee& -2  & -1  & -1  \\
\end{array}\right)
\left(\begin{array}{cccccccc}
\mtee&  2  &\mtee&  1  &  2  &\mtee&  1  &  2  \\
\mtee&\mtee&\mtee&  1  & -1  &\mtee&  1  & -1  \\
\mtee& -2  &\mtee& -2  & -1  &\mtee& -2  & -1  \\
\end{array}\right)
\\
&\left(\begin{array}{cccccccc}
\mtee&  2  &\mtee&  1  &  2  &\mtee&  1  &  2  \\
\mtee&\mtee&\mtee&  1  & -1  &  1  & -1  &\mtee\\
\mtee& -2  &\mtee& -2  & -1  & -1  &\mtee& -2  \\
\end{array}\right)
\left(\begin{array}{cccccccc}
\mtee&  2  &\mtee&  1  &  2  &\mtee&  1  &  2  \\
\mtee&\mtee&  1  & -1  &\mtee&\mtee&  1  & -1  \\
\mtee& -2  & -1  &\mtee& -2  &\mtee& -2  & -1  \\
\end{array}\right)
\\
&
\left(\begin{array}{cccccccc}
\mtee&  2  &\mtee&  1  &  1  &  1  &  1  &  2  \\
\mtee&\mtee&\mtee&  1  & -1  &  1  & -1  &\mtee\\
\mtee& -2  &\mtee& -2  &\mtee& -2  &\mtee& -2  \\
\end{array}\right)
\left(\begin{array}{cccccccc}
\mtee&  2  &\mtee&  1  &  1  &  2  &\mtee&  2  \\
\mtee&\mtee&\mtee&  1  & -1  &\mtee&  1  & -1  \\
\mtee& -2  &\mtee& -2  &\mtee& -2  & -1  & -1  \\
\end{array}\right)
\end{align*}

It is clear from \tref{t:botrows} that there are a great many species
of transversal-free latin squares of even order. The true number is
still far from known, but we would expect it to be negligible compared
to the number of all latin squares.

\section{Latin squares with an odd plex but no transversal}

The remainder of the paper is devoted to constructions of latin
squares that have no transversal but do have $k$-plexes for at least
one odd value of $k$. Our strategy will always be to start with $B_n$
(which has no odd plexes by \lref{l:Delta}). In $B_n$ we will locate a
structure $J$ which is close to being a $k$-plex. 
Then we will alter $B_n$ slightly and in the process
relocate a few entries from $J$ in order to make it into a $k$-plex.
Since we always begin with $B_n$ it is convenient to define a notation
$(x;y)$ to be the triple $(x,y,z)\in N_n\times N_n\times N_n$ for which 
$z\equiv x+y\mod n$. We also adopt the convention that the result
of all calculations for indices will be reduced mod $n$ to an element
of $N_n$.

Our method for changing $B_n$ will be to use the well known theory
of latin trades (see, for example, the survey \cite{tradsurv}).
A {\em latin trade} in $B_n$ is a subset $Q$ of the entries of $B_n$ that
can be removed and replaced by a disjoint set $Q'$ of entries to
produce a new latin square. The set $Q'$ is known as the 
{\em disjoint mate} for $Q$. The sets $Q,Q'$ are sets of entries
(triples) with the property that $\pi(Q)=\pi(Q')$ for any
of the three projections $\pi$ onto two coordinates.
Checking that our latin trades and their mates have this property will be
left as a routine exercise to the reader.

\bigbreak

For the remainder of this section, $k$ is odd and $n=2km$ for some
integer $m\ge2$.  Our aim is to establish the existence of latin
squares of order $n$ which contain a $k$-plex but no smaller odd
plexes.

\bigbreak

We start by identifying a set of entries inside $B_n$, 
which we denote by $J$. 
We let $J=J_0\cup J_1\cup J_2\cup J_3$ where:
\begin{align*}
J_0 & = \big\{(i;2jm+i-1):  1\leq i\leq m,\; 0\leq j\leq k-1 \big\} \\
J_1 & = \big\{(i;2jm+i):  m\leq i\leq 2m-1,\; 0\leq j\leq k-1 \big\} \\
J_2 & = \big\{(2m(2\ell-1)+i;2jm+i): 0 \leq i\leq 2m-1,\; 0\leq j\leq k-1,\; 1\leq \ell\leq (k-1)/2  \big\}  \\
J_3 & = \big\{(4m\ell+i;2jm+i+1): 0 \leq i\leq 2m-1,\; 0\leq j\leq k-1,\; 1\leq \ell\leq (k-1)/2  \big\}.
\end{align*}

Below we exhibit $J$ when $k=m=3$. 
\[
\begin{array}{|C|C|C|C|C|C|C|C|C|C|C|C|C|C|C|C|C|C|}
\hline
&  &  &  &  & & & & & & & & & & & & & \\
\hline
1&  &  &  &  & & 7 & & & & & & 13 & & & & & \\
\hline
& 3 &  &  &  &  &  & 9 & & & & & & 15 & & & & \\
\hline
&  &  5 & 6 &  &  &  &  & 11 & 12 & &  &  &  & 17  & 0 & & \\
\hline
&  &  &  & 8 &  &  & & & & 14 & & & & &  & 2 & \\
\hline
 &  &  &  &  & 10 &  & & & & & 16 & & &  &  &  &  4\\
\hline
6 &  &  &  &  &  & 12  & & & & & & 0 & &  &  & &  \\
\hline
 &  8 &  &  &   &  &  & 14 & & & & & & 2 &  &  & &  \\
\hline
&  &  10 &  &    &     &      &    &  16 &   &   & & & & 4 &  & &  \\
\hline
 &  &  & 12  &    &     &      &   &   & 0 &    & & & &  &  6 &  &  \\
\hline
 &  &  &  & 14   &     &      &  &    &  & 2  &   & & &  &  & 8 &   \\
\hline
 &  &  &  &    &   16  &      &      &  & &  & 4  &  & &   &  & & 10 \\
\hline
 & 13 & &  &    &     &      &  1    &  &   & & &  & 7 &  &  & &  \\
\hline
 &  & 15 &  &    &     &      &     & 3  &   &  &  & &  & 9 &   & &  \\
\hline
 &  & &  17 &    &    &      &      &  &  5 &  &  &  & &   & 11   & &  \\
\hline
 &  & &  &     1   &     &      &  &   &  & 7 &  &  & &   & & 13 &   \\
\hline
 &  & &  &       &  3  &      &  &   &  &  & 9 &  &   & & & & 15   \\
\hline
17 &   &  &    &    &    & 5   &   &   &  &  & & 11 &  &   &  & &  \\
\hline
\end{array}
\]

\begin{lemma}\label{l:almost2}
Each column of $B_n$ contains precisely $k$ elements of $J$. Each
symbol in $N_n$ appears in precisely $k$ elements of $J$. Each row of $B_n$
contains precisely $k$ elements of $J$, except for the first row which
contains no elements and row $m$ which contains $2k$ elements.
\end{lemma}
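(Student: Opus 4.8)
The plan is to verify the three counting claims in \lref{l:almost2} by working directly from the definitions of $J_0,J_1,J_2,J_3$, treating each of the four projection-counts (by column, by symbol, by row) separately and summing the contributions of the four pieces. Throughout, recall that an entry written $(x;y)$ is the triple $(x,y,z)$ with $z\equiv x+y\bmod n$, and that $n=2km$.

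First I would count, for each fixed column index $c\in N_n$, how many entries of each $J_t$ lie in column $c$. For $J_2$ and $J_3$ the column index is $2jm+i$ (resp.\ $2jm+i+1$) with $0\le i\le 2m-1$ and $0\le j\le k-1$, so as $(i,j)$ ranges over its domain the value $2jm+i$ (resp.\ $2jm+i+1$) hits every residue mod $2km=n$ exactly once for each value of $\ell$; hence $J_2$ contributes exactly $(k-1)/2$ entries to each column and likewise $J_3$ contributes $(k-1)/2$. For $J_0$ and $J_1$ the row index $i$ is restricted ($1\le i\le m$ in $J_0$, $m\le i\le 2m-1$ in $J_1$), so I would check that the column values $2jm+i-1$ (for $J_0$) and $2jm+i$ (for $J_1$), as $i$ and $j$ range, together cover each residue mod $n$ exactly once: $J_0$ covers column indices in the blocks $\{2jm,\dots,2jm+m-1\}$ and $J_1$ covers $\{2jm+m,\dots,2jm+2m-1\}$, and these two families of intervals tile $N_n$. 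So $J_0\cup J_1$ contributes exactly one entry per column. Adding up, each column receives $1+(k-1)/2+(k-1)/2=k$ entries, as claimed. The symbol count is essentially the same computation: the symbol of an entry $(x;y)$ is $x+y$, which for $J_2$ equals $2m(2\ell-1)+2jm+2i\bmod n$ and for $J_3$ equals $4m\ell+2jm+2i+1\bmod n$; again for fixed $\ell$ the pair $(i,j)$ makes these run over a full residue system mod $n$ (the step in $i$ is $2$ and $i$ runs over $2m$ values, the step in $j$ is $2m$ and $j$ runs over $k$ values, and $\gcd$ considerations give a complete system), so $J_2$ and $J_3$ each contribute $(k-1)/2$ occurrences of every symbol, while for $J_0$ the symbol $2jm+2i-1$ and for $J_1$ the symbol $2jm+2i$ jointly hit every residue once — one needs that $\{2i-1:1\le i\le m\}$ and $\{2i:m\le i\le 2m-1\}$ together with the $2jm$ shifts tile $N_n$, which is a short check. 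Summing gives $k$ occurrences of each symbol.

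For the row count, the rows appearing in $J_0$ are $1,\dots,m$ and in $J_1$ are $m,\dots,2m-1$, so row $m$ gets $k$ entries from $J_0$ and $k$ from $J_1$, every other row in $1,\dots,2m-1$ gets exactly $k$ entries from $J_0\cup J_1$, row $0$ gets none, and rows $2m,\dots,n-1$ get none from $J_0\cup J_1$; in each of these cases $j$ ranges over all $k$ values so the count is exactly $k$ (or $0$, or $2k$ for row $m$). The rows appearing in $J_2$ are those of the form $2m(2\ell-1)+i$ and in $J_3$ those of the form $4m\ell+i$, with $0\le i\le 2m-1$ and $1\le\ell\le(k-1)/2$. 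I would check that these row-blocks are pairwise disjoint and, together with the block $\{0,1,\dots,2m-1\}$ covered by $J_0\cup J_1$, partition $N_n$: the $J_2$-blocks are the intervals $[2m,4m),[6m,8m),\dots$ (the ``odd'' multiples of $2m$) and the $J_3$-blocks are $[4m,6m),[8m,10m),\dots$ (the ``even'' multiples of $2m$), and since $n/(2m)=k$ is odd, the blocks $[0,2m),[2m,4m),\dots,[(k-1)2m,n)$ split as one base block plus $(k-1)/2$ blocks of each parity — exactly matching the ranges of $\ell$. Within each such block every row gets exactly $k$ entries because $j$ again runs freely over $0,\dots,k-1$. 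Combining: row $0$ has $0$ entries, row $m$ has $2k$, and every other row has exactly $k$.

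The main obstacle, and the only place that needs genuine care rather than bookkeeping, is the tiling verification in each case: that the relevant index intervals (for columns: the length-$m$ blocks from $J_0$ interleaved with the length-$m$ blocks from $J_1$; for rows: the base block $[0,2m)$ together with the $J_2$-blocks and $J_3$-blocks) are pairwise disjoint and exhaust $N_n$. This is exactly where the hypotheses $n=2km$ with $k$ odd and $m\ge2$ are used, and where the off-by-one shifts ($i-1$ in $J_0$, $+1$ in $J_3$) must be tracked. Once the three tilings are confirmed, the counts follow by observing that in every block the parameter $j$ (and, for the symbol count, the stride-$2$ pattern in $i$) ranges over a complete residue system, so each contributes a uniform $k$, $(k-1)/2$, or $0$ as appropriate, and the totals add to $k$ for columns and symbols and to $k$ for rows except the two stated exceptions.
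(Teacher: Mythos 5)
Your column and row counts are correct and follow essentially the same route as the paper: $J_2$ and $J_3$ each hit every column once per value of $\ell$, $J_0\cup J_1$ tiles the columns once via the interleaved length-$m$ blocks, and the row blocks $[0,2m),[2m,4m),\dots,[2m(k-1),n)$ are distributed among $J_0\cup J_1$, $J_2$ and $J_3$ exactly as you describe.

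However, your symbol count contains a false step. You claim that for fixed $\ell$ the symbols of $J_2$, namely $2m(2\ell-1)+2jm+2i \bmod n$, ``run over a full residue system mod $n$'' as $(i,j)$ ranges over its $2mk=n$ values, and you invoke ``$\gcd$ considerations.'' But the step in $i$ is $2$ and $n=2km$ is even, so $\gcd(2,n)=2$ and these values are all \emph{even}: they cover each even residue exactly twice and no odd residue at all. Likewise the symbols of $J_3$ are all odd, each odd residue occurring twice per $\ell$. Consequently your assertion that ``$J_2$ and $J_3$ each contribute $(k-1)/2$ occurrences of every symbol'' is wrong as stated: $J_2$ contributes $k-1$ occurrences of each even symbol and none of each odd symbol, and $J_3$ the reverse. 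The correct bookkeeping (which is what the paper does) pairs $J_1$ (one occurrence of each even symbol) with $J_2$ ($k-1$ occurrences of each even symbol) and $J_0$ (one occurrence of each odd symbol) with $J_3$ ($k-1$ occurrences of each odd symbol), giving $k$ in each parity class. Your final total of $k$ per symbol happens to come out right only because the two misattributed contributions of $(k-1)/2$ sum to the correct $k-1$; the intermediate claim itself would fail if one tried to use it for anything finer, so it needs to be replaced by the parity-split argument.
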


\begin{proof}
The claim about rows is straightforward to check.  Each column appears
once in $J_0\cup J_1$, $(k-1)/2$ times in $J_2$ (once for each choice
of $\ell$) and $(k-1)/2$ times in $J_3$ (again, once for each choice
of $\ell$).  Each even symbol occurs once in $J_1$ and $k-1$
times in $J_2$ (twice for each choice of $\ell$).  Each odd
symbol occurs once in $J_0$ and $k-1$ times in $J_3$
(twice for each choice of $\ell$).
\end{proof}

Our aim is to find a latin trade in $B_n$ which allows us to shift precisely
$k$ elements of $J$ from row $m$ to row $0$ without making further
changes to $J$. This will allow us to show:

\begin{theorem}\label{t:kk2}
Let $k$ be odd and $m\geq 2$. Then there exists a latin square of order $n=2km$ 
with a $k$-plex but no $k'$-plex for odd $k'<k$.
\end{theorem}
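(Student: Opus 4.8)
The plan is to realize the vague outline already given before the theorem: start from $B_n$ with the set $J$ defined above, find a suitable latin trade $Q\subset B_n$ with disjoint mate $Q'$, perform the trade to obtain a latin square $L$, and show that $J$ is transformed into a genuine $k$-plex of $L$ while $L$ inherits from $B_n$ the property of having no odd plex smaller than $k$. By \lref{l:almost2}, $J$ already has the right column counts and symbol counts, and the right row counts except that row $0$ is empty and row $m$ is doubled. So the entire task reduces to finding a trade that moves exactly $k$ entries of $J$ out of row $m$ and into row $0$, changing nothing else about the intersection of $J$ with the square. Concretely I would look for a trade $Q$ supported on rows $0$ and $m$ (and whatever columns are forced), exhibit $Q$ and $Q'$ explicitly as sets of triples $(x;y)$, and check the three projection conditions $\pi(Q)=\pi(Q')$ — which the paper has already flagged as a routine verification left to the reader.

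First I would write down the $k$ entries of $J$ lying in row $m$; from the definitions $J_0$ contributes the entry with $i=m$ (namely $(m;2jm+m-1)$ over $0\le j\le k-1$ is in $J_0$ only when — careful — actually $i=m$ lies in both $J_0$'s range $1\le i\le m$ and $J_1$'s range $m\le i\le 2m-1$, which is exactly why row $m$ has $2k$ elements), so the $2k$ entries of $J$ in row $m$ split as $k$ entries $(m;2jm+m-1)$ from $J_0$ and $k$ entries $(m;2jm+m)$ from $J_1$. I would then choose to relocate, say, the $J_0$-entries of row $m$ down to row $0$: the target entries in row $0$ should be $(0;2jm+m-1)$ for $0\le j\le k-1$, i.e.\ the same columns, with the symbol decreased by $m$ (mod $n$), which is consistent since in $B_n$ decreasing the row index by $m$ decreases the symbol by $m$. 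So the natural trade is $Q=\{(m;2jm+m-1):0\le j\le k-1\}\cup\{(0;2jm-1):0\le j\le k-1\}$ and $Q'=\{(0;2jm+m-1):0\le j\le k-1\}\cup\{(m;2jm-1):0\le j\le k-1\}$ — this is a disjoint union of $k$ intercalates (or possibly one bigger cyclic trade, depending on how the columns $2jm+m-1$ and $2jm-1$ interleave), and the projection conditions reduce to the observation that the multiset of columns used in rows $0$ and $m$ is the same before and after, and likewise the multiset of symbols. I would double-check that none of the entries of $Q'$ already lie in $J$ (so that after the trade $J$'s other entries are untouched and exactly $k$ new entries of $J$ appear in row $0$, while row $m$ drops to $k$), and that $Q$ is genuinely inside $B_n$ and $Q'$ disjoint from $Q$.

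Once the trade is verified, the argument concludes quickly. Let $L=(B_n\setminus Q)\cup Q'$ and let $J^\ast=(J\setminus Q)\cup(J\cap Q')$ be the correspondingly modified copy of $J$. By construction $J^\ast$ picks exactly $k$ entries from every row and column of $L$ and covers every symbol exactly $k$ times, so $J^\ast$ is a $k$-plex of $L$; hence $L$ has a $k$-plex. It remains to show $L$ has no odd plex of order $k'<k$. For this I would apply \lref{l:Delta} with $m=1$: in $B_n$ every entry $(r,c,s)$ has $s\equiv r+c$, so $\Delta_1\equiv 0$ on all of $B_n$, whereas $L$ differs from $B_n$ only on the $2k$ entries of $Q'$, on each of which $\Delta_1$ is bounded in absolute value by a small constant (at most $\lfloor n/2\rfloor$ trivially, but in fact one computes $\Delta_1=\pm m$ on the relocated entries). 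Thus for any $k'$-plex $K$ of $L$, $\big|\sum_{(r,c,s)\in K}\Delta_1(r,c,s)\big|\le 2k\cdot m<n/2$ when $k'<k$ (since $n=2km$, so $2km=n$ and the relevant bound is $(2k'{-}\text{something})m$; I need the precise count of $Q'$-entries a $k'$-plex can use, which is at most $2k'$, giving a bound $<n$ when $k'<k$), contradicting \lref{l:Delta} which forces the sum to be $\equiv n/2\pmod n$ and in particular nonzero of absolute value at least $n/2$ when it is the least-absolute-value representative. I would have to be slightly careful to get the constants to work out — ensuring the bound on $\big|\sum_K\Delta_1\big|$ is strictly less than $n/2$ for every odd $k'<k$ — and that is the one spot where the choice of which $k$ entries to relocate (and hence the exact values of $\Delta_1$ on $Q'$) matters. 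That constant-chasing step, together with confirming the trade really leaves the rest of $J$ alone, is the main obstacle; everything else is bookkeeping with \lref{l:almost2} and \lref{l:Delta}.
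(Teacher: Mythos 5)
Your overall strategy is exactly the paper's: trade between rows $0$ and $m$ of $B_n$ so that $k$ entries of $J$ migrate from row $m$ to row $0$, then invoke \lref{l:almost2} for the $k$-plex and \lref{l:Delta} for the absence of smaller odd plexes. However, the trade you write down is not a latin trade. In your $Q$ each column occurs in exactly one cell ($k$ columns meet row $m$ only, the other $k$ meet row $0$ only), whereas a latin trade must satisfy $\pi(Q)=\pi(Q')$ for the (row, column) projection, i.e.\ $Q'$ must occupy the \emph{same} cells as $Q$; a column occurring once in $Q$ would then be forced to carry the same symbol in the same cell in $Q'$, contradicting disjointness. Concretely, superimposing your $Q'$ on $B_n\setminus Q$ puts two symbols in cell $(0,(2j{+}1)m{-}1)$ and none in cell $(0,2jm{-}1)$. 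The related conceptual slip is your claim that the relocated entries keep their column but have ``the symbol decreased by $m$'': if the symbol changed, the modified $J$ would have some symbols occurring $k+1$ times and others $k-1$ times, so it would not be a $k$-plex. A trade that moves a $J$-entry from $(m,c)$ to $(0,c)$ must contain \emph{both} cells and swap their symbols, so that the triple $(m,c,s)$ is replaced by $(0,c,s)$ --- same column, same symbol, new row; and for the two rows to remain latin the column set must be closed under adding $m$ modulo $n$. This is why the paper takes $T=\{(0;jm),(m;jm):0\le j<2k\}$ (all $4k$ cells), notes $T\cap J=\{(m;m+2mj):0\le j\le k-1\}$ for $m\ge2$, and swaps within columns.

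The second gap is the final inequality. Your bound $\bigl|\sum_K\Delta_1\bigr|\le 2k'm$ only beats $n=2km$, not $n/2=km$, and \lref{l:Delta} requires ruling out $\sum\equiv n/2\pmod n$, for which you need $\bigl|\sum\bigr|<n/2$; for $k\ge5$ and $k'=k-2$ your bound $2(k-2)m=n-4m$ exceeds $n/2$, so no contradiction follows. The missing observation is that all entries with $\Delta_1=+m$ lie in row $0$ and all entries with $\Delta_1=-m$ lie in row $m$, and a $k'$-plex meets each row exactly $k'$ times; hence the sum lies in $[-k'm,\,k'm]$ and $k'm<km=n/2$. You flagged both of these as the points needing care, and both are repairable along the lines above, but as written neither the trade nor the concluding estimate is correct.
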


\begin{proof}
Observe that $\big\{(0;jm),(m;jm): 0\leq j< 2k\big\}$ defines
a latin trade $T$ in $B_n$. The disjoint mate $T'$ of $T$ is formed by
swapping the two symbols in each column of $T$.  Moreover, 
$T\cap J=\big\{ (m;m+2mj): 0\leq j\leq k-1 \big\}$ since $m\geq 2$. 

Thus replacing $T$ with $T'$ in $B_n$ has the effect of shifting $k$
entries of $J$ from row $m$ to row $0$. From \lref{l:almost2},
$L':=(B_n\setminus T)\cup T'$ contains a $k$-plex.

Suppose that $L'$ contains a $k'$-plex $K$ for some odd $k'$ such that
$k'<k$.  For each $(r,c,s)\in T'$, $\Delta_1(r,c,s)=m$ if $r=0$ and
$\Delta_1(r,c,s)=-m$ if $r=m$.  Thus
$$\left\lvert\sum_{\smash{(}r,c,s\smash{)\in K}} \Delta_1(r,c,s)\right\lvert 
\leq mk'<n/2.$$
Hence by \lref{l:Delta}, there is no $k'$-plex in $L'$.  
\end{proof}

In the extreme case $m=2$, the previous theorem implies the existence
of a latin square of order $4k$ with a $k$-plex but no smaller odd
plexes; such a structure was first shown to exist in \cite{EW08}.
Note that the $k$-plex constructed in \tref{t:kk2} is necessarily
indivisible in the sense that it cannot be partitioned into two or more
smaller plexes. In \cite{EW11}, it was shown that for all $n\notin{2,6}$,
if $k$ is any proper divisor of $n$ then there exists a latin square of 
order $n$ that can be partitioned into indivisible $k$-plexes. That 
result is in the spirit of \tref{t:kk2}, though neither implies the other.

\section{Latin squares with a triplex but no transversal}\label{s:triplexnotrans}

In this section we prove \cjref{cj:triplexnotrans}.
Our proof splits into several subcases.  Recall that $B_n$ has no odd
plexes, by \lref{l:Delta}.  In each of several cases we will show that
it is possible to change a small number of the entries of $B_n$ so
that a triplex is created and yet 
there are still no transverals.



First we construct transversal-free latin squares that have a triplex,
for certain small orders that are missed by the general constructions
that we will subsequently give. The examples were found by asking a
computer to complete a partial triplex in a specific latin square
$L_n$.  The construction of $L_n$ is as follows. Let $n=4m+2$. For
$i,j\in\{0,1,\dots,n-1\}$, define
\[
L_n[i,j]\text{ mod }n\equiv\begin{cases}
i+j+1&\text{if }n-1-m\le i\le n-2\text{ and }j\in\{m,3m+1\},\\
i+j-1&\text{if }n-m\le i<n\text{ and }j\in\{m+1,3m+2\},\\
i+j+m&\text{if }i=n-1-m\text{ and }j\in\{0,m+1,2m+1,3m+2\},\\
i+j-m&\text{if }i=n-1\text{ and }j\in\{0,m,2m+1,3m+1\},\\
i+j&\text{otherwise}.
\end{cases}
\]

\begin{theorem}\label{t:small}
For $n\in\{10,14,18,22,26,34,38,46,50,62\}$, there is a triplex but no trans\-versal in $L_n$.
\end{theorem}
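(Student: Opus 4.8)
The statement of \tref{t:small} is, by its nature, a finite verification: for each of the ten listed orders $n$ we must exhibit a triplex in the explicitly-defined square $L_n$ and check that $L_n$ has no transversal. The plan is therefore to organize a computer search and then record its outcome, rather than to produce a human-checkable chain of deductions. First I would confirm that $L_n$ is genuinely a latin square for each $n$ in the list; since $L_n$ is obtained from $B_n$ by a small number of local modifications concentrated in the last $m+1$ rows and in a handful of columns, this amounts to checking that the alterations described in the five-case definition together constitute a latin trade in $B_n$ (that is, that the modified cells, viewed as a set of triples, agree with a disjoint mate under all three projections). This is the kind of routine bookkeeping the authors elsewhere ``leave to the reader'', and I would do it the same way here.

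Next, for the triplex: I would have a program search for a $3$-plex of $L_n$, ideally starting from a natural partial structure (as the authors indicate, they ``ask a computer to complete a partial triplex'') — for instance, one built around the unperturbed part of $L_n$, which is just $B_n$ and hence has a natural $3$-plex coming from three translates of a near-transversal pattern. The search is a straightforward exact-cover / backtracking computation at these small orders ($n\le 62$), so completion either succeeds or the case is dropped from the theorem; the ten listed values are presumably exactly those in the relevant range where it succeeds and where the later general constructions do not already apply.

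For the absence of a transversal, the honest approach is again exhaustive: at orders up to $62$ one can enumerate all transversals of $L_n$ directly (or run a SAT/exact-cover solver and confirm infeasibility). But I would also look for a structural certificate in the spirit of \lref{l:Delta}, since that is the tool the whole paper hangs on: the modifications defining $L_n$ move only a bounded number of entries, and one might hope that for a suitable odd divisor $m'$ of $n$ the $\Delta_{m'}$-sum over any transversal is forced into the wrong residue class. If such a $\Delta$ argument can be made to work uniformly, it would replace the brute-force transversal count by a short calculation; if not, I would simply report the computer verification. Either way, the statement is proved by exhibiting, for each listed $n$, the explicit triplex together with the (computational or $\Delta$-based) proof of transversal-freeness.

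The main obstacle is not mathematical depth but the gap between ``the computer says so'' and a self-contained proof. The genuinely delicate point is whether transversal-freeness admits a clean uniform explanation — a single $\Delta_{m'}$ obstruction valid for all ten orders — or whether each order (or each residue class of $m$) needs its own small argument; pinning that down is where the real work lies, and if no uniform obstruction exists, the proof will necessarily rest on the stated computations, with the explicit triplexes recorded (e.g.\ in a table or an ancillary file) so the reader can verify them independently.
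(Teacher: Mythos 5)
Your outline matches the paper's strategy in its broad shape---the triplexes are indeed found by computer and simply recorded, and checking that $L_n$ is a latin square is a routine trade verification---but the one piece of actual mathematics in the proof is precisely the step you leave conditional, namely whether a $\Delta$-obstruction to transversals ``can be made to work uniformly.'' It can, and the paper's proof consists of carrying it out; a proposal that defers this to ``either the $\Delta$ argument works or I enumerate'' has not yet proved anything. The square $L_n$ (with $n=4m+2$) is engineered so that every cell agreeing with $B_n$ has $\Delta_1=0$, and the nonzero values are confined to two rows and four columns: $\Delta_1=+m$ on four cells of row $n-1-m$, $\Delta_1=-m$ on four cells of row $n-1$, $\Delta_1=+1$ only on cells in columns $m$ and $3m+1$, and $\Delta_1=-1$ only on cells in columns $m+1$ and $3m+2$. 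A transversal $T$ meets each row and each column exactly once, so its total positive contribution is at most $m$ (one cell of row $n-1-m$) plus $1+1$ (one cell each of columns $m$ and $3m+1$), giving $S\le m+2$; symmetrically $S\ge-(m+2)$. Since $m+2<2m+1=n/2$ for all $m\ge2$, we get $|S|<n/2$, contradicting the requirement $S\equiv n/2\pmod n$ from \lref{l:Delta}. This single calculation disposes of all ten orders at once.

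Two further cautions. First, your fallback---exhaustively certifying that a latin square of order $62$ has \emph{no} transversal---is not the ``straightforward'' computation you describe; it is a substantial unsatisfiability search, and a proof should not be allowed to rest on it when a four-line counting argument is available (and is, in fact, the reason $L_n$ was defined with its perturbations confined to those particular rows and columns). Second, for the triplex you still need to exhibit the explicit selections (the paper lists them row by row, with tail data $E_n$ for each order) so that the $3$-per-row, $3$-per-column, $3$-per-symbol conditions are checkable by hand or by a trivially verifiable computation; ``the search succeeds'' is not a certificate. With the $\Delta_1$ bound supplied and the triplexes written down, your proposal becomes the paper's proof.
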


\begin{proof}
Let $h=n/2$ and $m=(n-2)/4$. Suppose that $T$ is a transversal of $L_n$.
By \lref{l:Delta} the sum, $S$, of the $\Delta_1$ function over $T$ 
must be $h\mod n$.
However $T$ can have at most one entry in row $n-1-m$ and at most one entry
in each of columns $m,3m+1$. It follows that $S\le m+2<2m+1=h$. Similarly, 
$T$ can have at most one entry in row $n-1$ and at most one entry
in each of columns $m+1,3m+2$, so $S>-h$. It follows that $S\not\equiv h\mod n$,
so $L_n$ has no transversal.

Next, we specify a triplex $P$ in $L_n$ as follows. We start by choosing
the cells in columns $i,i+h-1,i+h$ of row $i$ for $0\le i<h$.
Next we choose the cells in columns $3(i-h),3(i-h)+1,3(i-h)+2$ of row $i$ for 
$h\le i<h+\lfloor n/6\rfloor$. For the rows with index 
$h+\lfloor n/6\rfloor$ to $n-1$ (in that order)
we list the column indices of the cells to choose in $E_n$, where $E_n$
is as follows:
\begin{align*}
E_{10}&=[[6,7,9],[0,5,8],[1,2,3],[3,4,9]],\\[1.6ex]
E_{14}&=[[6,10,11],[0,4,13],[7,8,13],[3,5,9],[1,2,12]],\\[1.6ex]
E_{18}&=[[1,13,14],[0,5,10],[11,15,16],[6,9,12],[4,7,17],[2,3,17]],\\[1.6ex]
E_{22}&=[[17,18,21],[14,15,16],[12,20,21],[0,8,9],[1,7,9],[4,6,10],[2,3,13],[5,11,19]],\\[1.6ex]
E_{26}&=[[19,20,25],[13,16,21],[14,18,22],[9,15,23],[2,11,17],[8,24,25],\\[0.1ex]&\qquad
[1,10,12],[3,4,5],[0,6,7]],\\[1.6ex]
E_{34}&=[[2,25,26],[27,28,32],[23,29,33],[20,21,24],[13,17,19],[12,14,15],\\[0.1ex]&\qquad
[10,15,16],[3,6,11],[1,7,33],[18,30,31],[4,5,22],[0,8,9]],\\[1.6ex]
E_{38}&=[[18,19,20],[21,23,25],[26,28,29],[0,10,27],[30,31,32],[33,34,35],\\[0.1ex]&\qquad
[2,36,37],[3,22,37],[1,6,24],[4,7,14],[8,9,11],[15,16,17],[5,12,13]],\\[1.6ex]
E_{46}&=[[1,34,35],[36,37,38],[31,33,39],[29,40,42],[26,27,45],[19,20,28],\\[0.1ex]&\qquad
[21,23,25],[16,18,21],[13,14,43],[2,3,4],[17,44,45],[5,6,8],[9,30,41],\\[0.1ex]&\qquad
[7,10,24],[15,22,32],[0,11,12]],\\[1.6ex]
E_{50}&=[[2,37,38],[39,40,49],[32,36,44],[33,41,42],[30,31,35],[25,27,43],\\[0.1ex]&\qquad
[23,24,47],[20,21,29],[16,17,48],[3,22,49],[1,4,5],[7,15,46],[8,10,14],\\[0.1ex]&\qquad
[6,11,19],[9,26,28],[18,34,45],[0,12,13]],\\[1.6ex]
E_{62}&=[[30,31,32],[1,46,47],[48,49,50],[39,40,42],[51,52,54],[41,45,55],[43,53,56],\\[0.1ex]&\qquad
[35,37,59],[33,38,60],[3,5,61],[6,34,61],[7,27,28],[24,25,26],[4,9,21],
\\[0.1ex]&\qquad
[10,12,58],[17,18,19],[13,14,20],[8,11,23],[22,36,57],[2,29,44],[0,15,16]].
\end{align*}
It is immediate from our construction that $P$ contains exactly $3$ entries
in each row. It is routine to check that $P$ also has exactly $3$ entries
in each column, and $3$ copies of each symbol in $N_n$.
\end{proof}


Next we consider the case when $n$ is divisible by $4$.

\bigbreak

We start by identifying a subset of $B_n$ which we denote by $J$. 
We let $J=J_0\cup J_1\cup J_2\cup J_3\cup J_4$ where:
\begin{align*}
J_0 & = \big\{(0;0),(0;1),(0;2),(0;3),(0;4)\big\} \\
J_1 & = \big\{(i;3i+2),(i;3i+3),(i;3i+4): 1\leq i\leq n/4-1\big\} \\
J_2 & = \big\{(n/4;3n/4+2)\big\} \\
J_3 & = \big\{(i;3i),(i;3i+1),(i;3i+2): n/4+1\leq i\leq n/2-1 \big\} \\
J_4 & = \big\{(i;i-n/2+1),(i;i),(i;i+1): n/2\leq i\leq n-1\big\}. 
\end{align*}

We exhibit $J$ when $n=12$:
\[\begin{array}{|C|C|C|C|C|C|C|C|C|C|C|C|}
\hline
0 & 1 & 2 & 3 & 4 & & & & & & & \\
\hline
& & & & & 6 & 7 & 8 & & & & \\
\hline
& & & & &  &  &  & 10 & 11 & 0 & \\
\hline
& & & & &  & &  & & & & 2 \\
\hline
4 & 5 & 6 & & &  & &  & & & &  \\
\hline
 &  &  & 8 & 9 & 10 & &  & & & &  \\
\hline
 & 7 &  &  &  & & 0 & 1 & & & &  \\
\hline
 &  & 9 &  &  & &  & 2 & 3 & & &  \\
\hline
 &  &  & 11 &  & &  &  & 4 & 5 & &  \\
\hline
 &  &  &  & 1 & &  &  &  & 6 & 7 &  \\
\hline
 &  &  &  &  & 3 &  &  &  &  & 8 & 9 \\
\hline
11 &  &  &  &  & & 5 &  &  &  &  & 10 \\
\hline
\end{array}
\]

\begin{lemma}\label{l:almost}
Each column of $B_n$ contains precisely $3$ elements of $J$. Each symbol
in $N_n$ appears in precisely $3$ elements of $J$. Each row of $J$
contains precisely $3$ elements of $J$, except for the first row which
contains $5$ elements and row $n/4$ which contains $1$ element.
\end{lemma}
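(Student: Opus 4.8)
The plan is to verify the three counting claims in \lref{l:almost} essentially by the same bookkeeping technique used in the proof of \lref{l:almost2}: go through the five pieces $J_0,\dots,J_4$ and, for each of the three coordinate projections (row, column, symbol), tally how many triples land on each value. Since $n$ is divisible by $4$ here, write $n=4p$ for brevity in the head of the argument, so $n/4=p$, $n/2=2p$, etc.

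First I would handle the rows, which \lref{l:almost} itself calls straightforward: $J_0$ contributes $5$ entries all in row $0$; $J_1$ contributes exactly $3$ entries to each row $i$ with $1\le i\le p-1$; $J_2$ contributes its single entry to row $p$; $J_3$ contributes $3$ entries to each row $i$ with $p+1\le i\le 2p-1$; and $J_4$ contributes $3$ entries to each row $i$ with $2p\le i\le n-1$. Thus every row $i\notin\{0,p\}$ gets exactly $3$, row $0$ gets $5$, and row $p$ gets $1$, which is the row statement. (One should note in passing that the sets $J_0,\dots,J_4$ are pairwise disjoint, which is clear since their row-index ranges are disjoint; that makes all these counts additive.)

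Next, the columns. I would count, for a fixed column value $c\in N_n$, how many triples of $B_n$ in $J$ have that column. The governing fact is that the column index in each piece runs, as $i$ varies over its range, through a set of consecutive residues, so each piece hits either every column a fixed number of times or a contiguous block of columns. The natural grouping: $J_0$ covers columns $0,1,2,3,4$ once each; $J_1\cup J_2$ should be shown to cover each column exactly once (the map $i\mapsto 3i$ on $\{1,\dots,p\}$ together with the three offsets $2,3,4$, reduced mod $n=4p$, is where the $3\bmod 4$-type bijection argument enters, with $J_2$ supplying the one column that $J_1$'s range would otherwise miss or double); $J_3$ covers some columns; and $J_4$, whose column set $\{i-2p+1,i,i+1\}$ for $i$ in $\{2p,\dots,n-1\}$, covers the remaining columns the right number of times. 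The cleanest route is to show $J_1\cup J_2$ contributes exactly $1$ to every column and $J_3\cup J_4$ contributes exactly $2$ to every column (or to split it as: $J_0$ gives columns $0$--$4$, $J_3$ gives a shifted copy, $J_4$ gives two more, and they interleave to give $3$ everywhere); I would pick whichever bookkeeping makes the arithmetic mod $n$ most transparent, exactly as the $J_2/J_3$ "once for each choice of $\ell$" tally worked in \lref{l:almost2}.

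Finally, the symbols. Since we are inside $B_n$, the symbol of $(i;c)$ is $i+c\bmod n$, so I would compute the symbol multiset of each $J_t$ by adding row to column. In $J_0$ the symbols are $0,1,2,3,4$. In $J_1$ the symbols for row $i$ are $4i+2,4i+3,4i+4$; as $i$ runs over $1,\dots,p-1$ these $4i$ terms sweep out a full set of residues mod $n=4p$ except for a few boundary values, and $J_2$ with symbol $3\cdot p + 2 + p = p+2$... — the precise residues must be chased — but the claim to land on is that $J_1\cup J_2$ contributes each symbol once. Likewise $J_3$ (symbols $4i,4i+1,4i+2$) and $J_4$ (symbols $2i-2p+1,\,2i,\,2i+1$, and here $2i$ for $i\in\{2p,\dots,4p-1\}$ hits every residue of a given parity pattern) together contribute each symbol twice. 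Summed, each symbol appears $3$ times.

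I expect the main obstacle to be purely the modular bookkeeping at the column and symbol level: one must be careful at the "wrap-around" boundaries (this is exactly why the singleton $J_2$ and the asymmetric first and $n/4$-th rows exist — they are correction terms patching the places where the naive arithmetic progressions would overcount or undercount by one), and one must confirm that $J_1$'s column/symbol progression together with $J_2$ really is a perfect system of representatives, rather than merely "almost" one. None of this is deep; it is the same style of finite check as in \lref{l:almost2}, just with five pieces and one more correction term, and I would present it as a short case tally rather than grinding every residue.
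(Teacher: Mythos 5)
Your row tally is fine and matches the paper (which just calls that part ``easy to check''), and the overall strategy --- tally each piece $J_0,\dots,J_4$ under each of the three projections --- is the only sensible one and is indeed what the paper does. The gap is that the specific intermediate claims you propose for the column and symbol counts are false, and their falsity is visible already from cardinalities. Writing $p=n/4$: you suggest showing that $J_1\cup J_2$ contributes exactly one entry to every column (and, for symbols, one occurrence of every symbol), but $|J_1\cup J_2|=3(p-1)+1=3p-2<n$, so this is impossible. Likewise $|J_3\cup J_4|=3(p-1)+6p=9p-3\neq 2n$ except when $n=12$, so $J_3\cup J_4$ cannot contribute exactly two to every column or symbol. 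In fact no grouping in which each block is uniform over all $n$ columns can work, because $J_0$ covers columns $0$--$4$ once and no others, so those five columns need only two further occurrences while the remaining columns need three. The same objection applies to your symbol sketch; also, the symbol of the singleton $J_2$ is $n/4+3n/4+2\equiv 2 \pmod n$, not $p+2$.

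The correct bookkeeping is a genuinely non-uniform interleaving, which is exactly the content you deferred as ``residues to be chased.'' For columns: $J_0$, $J_1$, $J_2$, $J_3$ cover, once each, the consecutive blocks $\{0,\dots,4\}$, $\{5,\dots,3n/4+1\}$, $\{3n/4+2\}$ and $\{3n/4+3,\dots,n-1\}\cup\{0,\dots,n/2-1\}$; adding the diagonal cells $(i;i)$ of $J_4$ brings every column to exactly two, and the remaining cells of $J_4$ have column set $\{i+1,\,i-n/2+1: n/2\le i<n\}=N_n$, supplying the third occurrence of each column. For symbols: $J_4$ contributes each odd symbol twice and each even symbol once; $J_1$ contributes once each symbol in $\{6,\dots,n-1\}\cup\{0\}$ not congruent to $1\bmod 4$; $J_3$ contributes once each symbol in $\{4,\dots,n-2\}$ not congruent to $3\bmod 4$; and the leftovers are patched exactly by $J_0$ (symbols $0,1,2,3,4$) and $J_2$ (symbol $2$). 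As written, your proof rests on sub-claims that a cardinality check refutes, so it does not yet establish the column or symbol statements.
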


\begin{proof}
First observe that $|J|=|J_0|+|J_1|+|J_2|+|J_3|+|J_4|= 
5+3(n/4-1)+1+3(n/4-1)+3(n/2)=3n$. 
The statements about rows are easy to check. Next consider the
columns.  Columns $0$ through to $4$ appear once each in $J_0$;
columns $5$ through to $3n/4+1$ appear once each in $J_1$; column
$3n/4+2$ appears once in $J_2$; columns $3n/4+3$ through to $n-1$ and
$0$ through to $n/2-1$ appear in $J_3$.  Including 
elements of $J_4$ on the main diagonal, we have each column appearing exactly
twice. The set $\{i+1,i-n/2+1: n/2\le i<n\}=N_n$ so each column
of $J$ has exactly $3$ filled cells.
  
Next, consider symbols. Observe that $J_4$ contains each odd symbol
exactly twice and each even symbol exactly once.  Meanwhile, $J_1$
contains exactly one copy of the symbols from $6$ to $n-1$ (and $0$)
except for those congruent to $1$ modulo $4$.  Also, $J_3$ contains
exactly one copy of the symbols from $4$ to $n-2$ except for those
congruent to $3$ modulo $4$. Since $1,3\in J_0$, each odd symbol
occurs thrice in $J$.  Finally, since $0,2,4\in J_0$ and $2\in J_2$
each even symbol also occurs thrice in $J$.
\end{proof}

We next describe how to change $B_n$ to obtain a latin square with a
triplex but no transversal.  For each $i\in \{0,1\}$, let $T_i\subset B_n$ 
be the latin trade of cardinality $8$ consisting of all symbols in rows
$0$ and $n/4$ which are congruent to $i$ modulo $n/4$. The (unique) disjoint
mates $T_i'$ are obtained by swapping the two symbols in each column of
$T$.  In each case we will show that one of the following latin
squares will have the desired properties:
\begin{align*}
L_1 & := (B_n\setminus T_0)\cup T_0', \\
L_2 & := (B_n\setminus T_1)\cup T_1', \\
L_3 & := (B_n\setminus (T_0\cup T_1)) \cup T_0' \cup T_1'.  
\end{align*} 

\begin{lemma}
Let $n$ be divisible by $4$ and let $n\geq 8$. 
For each $i\in \{1,2,3\}$, the latin square $L_i$ does not 
contain a transversal. 
If $n=8$, $L_2$ contains a triplex. 
If $n\in \{12,16\}$, $L_1$ contains a triplex. 
If $n\geq 20$, $L_3$ contains a triplex.  
\end{lemma}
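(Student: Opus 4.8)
The plan is to prove the transversal-free assertion for all three squares simultaneously, and then to settle the triplex claim by a short case analysis.

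For the transversal-free part I would apply \lref{l:Delta} with $m=k=1$. Each of $L_1$, $L_2$, $L_3$ agrees with $B_n$ outside rows $0$ and $n/4$, so $\Delta_1(r,c,s)=0$ for every entry of $L_i$ in a row other than $0$ or $n/4$. A one-line computation (using $n\geq8$, so that $n/4$ is the representative of least absolute value in its class modulo $n$) shows that the entries of the disjoint mates $T_0'$, $T_1'$ lying in row $0$ have $\Delta_1$-value $n/4$, while those lying in row $n/4$ have $\Delta_1$-value $-n/4$. Hence if $T$ were a transversal of $L_i$, then in $\sum_{(r,c,s)\in T}\Delta_1(r,c,s)$ the unique term from row $0$ is $0$ or $n/4$, the unique term from row $n/4$ is $0$ or $-n/4$, and all other terms vanish, so the sum has absolute value at most $n/4$. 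But every integer congruent to $n/2$ modulo $n$ has absolute value at least $n/2>n/4$, so this contradicts \lref{l:Delta}; hence none of $L_1$, $L_2$, $L_3$ has a transversal.

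For the triplex, \lref{l:almost} tells us that $J$ has three entries in every column, three copies of every symbol, and three entries in every row except row $0$ (which has five) and row $n/4$ (which has one). I would repair those two rows by using a trade to move exactly two entries from row $0$ to row $n/4$. Write $C_i=\{i,i+n/4,i+n/2,i+3n/4\}$ (reduced mod $n$) for the set of columns met by $T_i$. The only entries of $J$ in a row touched by $T_0$ or $T_1$ are those of $J_0$ (in row $0$) together with the single entry of $J_2$ (in row $n/4$), and a short check shows that this $J_2$ entry is never disturbed by the trade(s) used below; so $J\cap T_i$ equals $\{(0,c,c):c\in\{0,1,2,3,4\}\cap C_i\}$, and for each such $c$ the trade vacates the cell $(0,c)$ and, after swapping symbols within column $c$, places the symbol $c$ in cell $(n/4,c)$. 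Consequently, whenever $\{0,1,2,3,4\}\cap C_i$ consists of exactly two columns $c_1,c_2$, I would set
\[
P=\bigl(J\setminus\{(0,c_1,c_1),(0,c_2,c_2)\}\bigr)\cup\{(n/4,c_1,c_1),(n/4,c_2,c_2)\}
\]
and claim that $P$ is a triplex of the corresponding $L_i$: it lies in $L_i$ because the two deleted entries constitute all of $J\cap T_i$ (so the remainder of $J$ misses the trade) while the two inserted entries lie in $T_i'\subseteq L_i$; and deleting $(0,c_j,c_j)$ and inserting $(n/4,c_j,c_j)$ lowers the row-$0$ count by one, raises the row-$(n/4)$ count by one, and leaves the counts for column $c_j$ and symbol $c_j$ at three --- so $P$ meets every row, column and symbol exactly three times. (When two trades are needed one simply reads $T_0\cup T_1$ for the trade and $C_0\cup C_1$ for its column set, the two trades being disjoint.)

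It then remains to check, for each $n\equiv0\pmod4$ with $n\geq8$, that one of the available trades meets $\{0,1,2,3,4\}$ in exactly two columns. From $C_i=\{i,i+n/4,i+n/2,i+3n/4\}$ one reads off: for $n=8$, $C_1$ meets $\{0,1,2,3,4\}$ in $\{1,3\}$, so $L_2$ works; for $n=12$ and $n=16$, $C_0$ meets it in $\{0,3\}$ and $\{0,4\}$ respectively, so $L_1$ works; and for $n\geq20$ each of $C_0$, $C_1$ meets it in a single column but $C_0$, $C_1$ are disjoint, so $T_0\cup T_1$ meets it in exactly $\{0,1\}$ and $L_3$ works --- which is exactly the case split in the statement. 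This final case analysis, together with the trivial check that the $J_2$ entry is never moved, is the only part needing genuine care; the rest is the bookkeeping sketched above, and I foresee no real obstacle.
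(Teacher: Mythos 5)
Your proposal is correct and follows essentially the same route as the paper: the transversal-free claim via \lref{l:Delta} with the observation that only rows $0$ and $n/4$ carry nonzero $\Delta_1$-values (namely $n/4$ and $-n/4$), and the triplex claim via \lref{l:almost} by using the trade to shift exactly two entries of $J$ from row $0$ to row $n/4$. Your explicit computation of $C_i\cap\{0,1,2,3,4\}$ and the check that the $J_2$ entry in column $3n/4+2$ is untouched simply fill in details the paper leaves as "observing that in each case we have shifted precisely $2$ entries."
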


\begin{proof}
In each case, only rows $0$ and $n/4$ contain entries distinct from 
those in $B_n$. 
Thus $\Delta_1(r,c,s)=0$ if $r\not\in \{0,n/4\}$; $\Delta_1(r,c,s)\in\{0,n/4\}$ if 
$r=0$ and $\Delta_1(r,c,s)\in\{0,-n/4\}$ if $r=n/4$. Since a transversal contains 
exactly one entry in each row, if $K$ is a transversal,
$$\left\lvert \sum_{\smash{(}r\smash{,c,s)\in K}} \Delta_1(r,c,s)\right\lvert\leq n/4,$$
contradicting \lref{l:Delta}. 
The remaining claims follow from \lref{l:almost}, by observing that in each
case we have shifted precisely $2$ entries from row $0$ of $J$ to row
$n/4$ (and have made no other changes to $J$).
\end{proof}

\begin{corollary}
Let $n$ be divisible by $4$ and let $n\geq 8$. There exists a latin square of
order $n$ which contains a triplex but no transversal. 
\end{corollary}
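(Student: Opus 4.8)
The plan is to read the corollary straight off the preceding lemma by a trivial case analysis; that lemma has already done all the substantive work. For every $n$ divisible by $4$ with $n\ge8$ it exhibits explicit latin squares $L_1,L_2,L_3$, shows that none of them has a transversal, and shows that one particular $L_i$ (depending on the size of $n$) contains a triplex. So all that is left is to observe that the three ranges the lemma treats --- $n=8$, $n\in\{12,16\}$, and $n\ge20$ --- are pairwise disjoint and together exhaust the multiples of $4$ that are at least $8$.

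Carrying this out: for $n=8$ take $L=L_2$; for $n\in\{12,16\}$ take $L=L_1$; and for $n\ge20$ with $4\mid n$ take $L=L_3$. In each case the lemma guarantees simultaneously that $L$ is a latin square of order $n$ with no transversal and that $L$ contains a triplex, which is precisely the assertion of the corollary. I expect no obstacle whatsoever at this stage: the genuine content sits upstream --- the construction of the ``almost-triplex'' $J$, the counting in \lref{l:almost} showing that $J$ fails to be a triplex only because row $0$ carries two entries too many while row $n/4$ carries two too few, the fact that replacing each trade $T_i$ by its disjoint mate $T_i'$ relocates exactly those two offending entries from row $0$ to row $n/4$, and the $\Delta_1$ obstruction of \lref{l:Delta} (with $m=1$) that forbids transversals in the altered square --- and all of this is in place before one reaches the corollary.

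For context, this corollary settles \cjref{cj:triplexnotrans} for all orders $n\equiv0\pmod4$ with $n\ge8$; together with the sporadic orders handled in \tref{t:small}, what remains of the conjecture is the family of orders $n\equiv2\pmod4$ not on that list, which will need a separate general construction of the same flavour adapted to $n=4m+2$.
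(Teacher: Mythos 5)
Your proposal is correct and matches the paper's intent exactly: the corollary is stated without proof there precisely because it follows from the preceding lemma by the same three-way case split on $n=8$, $n\in\{12,16\}$, and $n\ge20$ that you describe. Nothing further is needed.
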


It remains to consider the case when $n\equiv2\mod4$. This splits 
into subcases according to the value of $n\mod 3$.

The $n\equiv6\mod12$ case is easiest.  Let $m=n/6$. The $m=1$ case is
well known (for an explicit example, see \cite{origplex}).  The case
$m=3$ is done in \tref{t:small}.  For odd $m\geq 5$, we may apply
\tref{t:kk2}.

Next we consider the case when $n\equiv 10\mod12$. 
Let $n=12m-2$.  By \tref{t:small} we may assume that $m\geq 5$. 

We start by identifying a subset of $B_n$ which we denote by $J$. 
We let $J=J_0\cup J_1\cup J_2\cup J_3$ where:
\begin{align*}
J_0 & = \big\{(i;3i-2),(i;3i-1),(i;3i):  1\leq i\leq 2m-1 \big\} \\
J_1 & = \big\{(2m-1;6m+1),(2m;6m-2),(2m;6m-1)\big\} \\
J_2 & = \big\{(i;3i+2),(i;3i+3),(i;3i+4): 2m\leq i\leq n/2-1 \big\} \\
J_3 & = \big\{(i;i-n/2),(i;i),(i;i+1): n/2\leq i\leq n-1\big\}. 
\end{align*}

Observe the following lemma. We omit the proof, which is elementary
and similar to that of \lref{l:almost}.

\begin{lemma}
Each column of $B_n$ contains precisely $3$ elements of $J$. Each symbol
in $N_n$ appears in precisely $3$ elements of $J$. Each row of $B_n$
contains precisely $3$ elements of $J$, except for the first row which
contains no elements, row $2m-1$ which contains $4$ elements and row $2m$ 
which contains $5$ elements of $J$.
\end{lemma}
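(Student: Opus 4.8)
The plan is to verify the three assertions --- about rows, columns, and symbols --- one at a time, in the style of the proof of \lref{l:almost}, keeping in mind throughout that $n=12m-2$, that $n/2=6m-1$ is odd, that $m\ge5$, and that the entry $(i;y)$ of $B_n$ carries the symbol $i+y$ reduced modulo $n$. The row count is the easy part: the leading-index ranges in the definition of $J$ essentially partition $N_n$, with $J_0$ lying in rows $1,\dots,2m-1$, $J_1$ in rows $2m-1$ and $2m$, $J_2$ in rows $2m,\dots,n/2-1$, and $J_3$ in rows $n/2,\dots,n-1$, while row $0$ receives nothing. Each of $J_0,J_2,J_3$ contributes exactly three entries to every row it meets, so rows $1,\dots,2m-2$, rows $2m+1,\dots,n/2-1$ and rows $n/2,\dots,n-1$ each hold three entries; row $2m-1$ holds the three entries from $J_0$ together with the single entry $(2m-1;6m+1)$ of $J_1$, hence four; and row $2m$ holds the two remaining entries of $J_1$ together with the three from $J_2$, hence five. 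Checking in passing that the entries just listed are distinct shows $|J|=3n$, a handy consistency check for what follows.

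For the columns I would read off the column indices, all taken modulo $n$. The columns of $J_0$ run once through $1,\dots,6m-3$; those of $J_1$ are $\{6m-2,6m-1,6m+1\}$; those of $J_2$ form the block $6m+2,6m+3,\dots,18m-2$, which modulo $n$ is $\{6m+2,\dots,n-1\}\cup\{0,1,\dots,6m\}$, once each; and the three families $i-n/2$, $i$, $i+1$ of $J_3$ run once through $\{0,\dots,6m-2\}$, $\{6m-1,\dots,n-1\}$ and $\{6m,\dots,n-1\}\cup\{0\}$ respectively. Summing the contributions over the ranges $1\le c\le 6m-2$, then $c=6m-1$, then $c=6m$, then $c=6m+1$, then $6m+2\le c\le n-1$, and finally $c=0$, one obtains three in every case.

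For the symbols, note first that in $J_3$ the symbols are $2i-n/2$, $2i$, $2i+1$ for $n/2\le i\le n-1$; as $i$ ranges over these $n/2$ values, $\{2i\}$ runs through all even residues, $\{2i+1\}$ through all odd residues, and (since $n/2$ is odd) $\{2i-n/2\}$ again through all odd residues, so $J_3$ contains every even symbol once and every odd symbol twice. It therefore suffices to show $J_0\cup J_1\cup J_2$ contains every even symbol twice and every odd symbol once. The symbols in $J_0$ are $4i-2,4i-1,4i$ for $1\le i\le 2m-1$, and those in $J_1$ are $8m-2,8m-1,8m$, so $J_0\cup J_1$ realises, once each, exactly the residues in $\{2,3,\dots,8m\}$ that are $\not\equiv1\pmod4$. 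The symbols of $J_2$ are $4i+2,4i+3,4i+4$ for $2m\le i\le n/2-1$; splitting this range at $i=3m-1$ (the first index whose symbols reach $n$) and reducing modulo $n$, one finds that $J_2$ supplies, once each, the residues $\not\equiv1\pmod4$ lying in $\{8m+2,\dots,12m-4\}$ together with every even symbol and every residue $\equiv1\pmod4$ of $N_n$ --- with the one subtlety that symbol $0$ is supplied twice, from $i=3m-1$ and from $i=6m-2$. Adding up: each residue $\equiv1\pmod4$ receives $0+1$; each residue $\equiv3\pmod4$ receives $1$ from exactly one of the three sets; each even symbol in $\{2,\dots,8m\}$ receives $1+1$; and each remaining even symbol, namely $0$ and those in $\{8m+2,\dots,12m-4\}$, receives $0+2$. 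In every case the total over $J_0\cup J_1\cup J_2$ is as required, and combining with $J_3$ yields exactly three of each symbol.

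I expect the symbol count for $J_2$ to be the main obstacle. Because $n\equiv2\pmod4$, reducing the run $4i+2,4i+3,4i+4$ modulo $n$ does not respect residues modulo $4$ in the naive way, so one has to pin down exactly where that run crosses $0$ and confirm that symbol $0$ is hit precisely the intended number of times; everything else amounts to reading off consecutive blocks of indices.
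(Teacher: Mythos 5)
Your proof is correct, and it follows exactly the direct-verification approach the paper intends: the paper omits this proof, pointing to the proof of \lref{l:almost} as the model, and your row/column/symbol case analysis (including the careful treatment of where the symbol runs of $J_2$ wrap past $n$ and hit $0$ twice) is precisely that argument carried out for $n=12m-2$. No gaps.
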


As in previous cases we wish to use latin trades in $B_n$ to
create a triplex without introducing a transversal.  To
this end we describe the following latin trades $T_0$ and $T_1$ within
the first $2m$ (respectively, $2m+1$) rows of $B_n$.
\begin{align*}
T_0 & = \big\{(i;2jm),(i;2jm+1): 1\leq j\leq 4,\,\; 0\leq i\leq 2m-1 \big\} \\
& \qquad \cup \big\{(0;1),(2m-1;1),(0;10m),(2m-1;10m)\big\}. \\
T_1 & = \big\{(0;2jm-2),(2m;2jm-2): 1\leq j\leq 4\big\}\\
& \qquad \cup\big\{(0;10m-2),(2m;12m-4)\big\} \\
& \qquad \cup \big\{(2i;12m-4-2i),(2i+2;12m-4-2i) : 0\leq i\leq m-1\big\}.
\end{align*} 

To verify that $T_0$ and $T_1$ each give latin trades in $B_n$ we exhibit 
their respective (unique) disjoint mates $T_0'$ and $T_1'$. 
\begin{align*}
T_0' & = \big\{(i,2jm,i+2jm+1),(i,2jm+1,i+2jm): 1\leq j\leq 4,\; 0< i< 2m-1 \big\} \\
& \qquad \ \cup \big\{(0,2jm,2jm+1),(0,2jm+1,2(j+1)m),(2m-1,2jm,2jm),\\
& \qquad \qquad \ (2m-1,2jm+1,2(j+1)m-1): 1\leq j\leq 4\big\} \\
& \qquad \ \cup \big\{(0,1,2m),(2m-1,1,1),(0,10m,1),(2m-1,10m,10m)\big\}. \\
T_1' & = \big\{(0,2jm-2,2(j+1)m-2),(2m,2jm-2,2jm-2): 1\leq j\leq 4\big\} \\
& \qquad \cup \big\{(0,10m-2,12m-4),(0,12m-4,2m-2)\big\} \\
& \qquad \cup \big\{(2m,10m-2,10m-2),(2m,12m-4,0)\big\} \\
& \qquad \cup \big\{(2i,12m-4-2i,0),(2i,12m-2-2i,12m-4) : 1\leq i\leq m-1\big\}. 
\end{align*} 

Lastly we define $T_2=\big\{(r,c+5,s+5): (r,c,s)\in T_1\big\}$ which
is clearly a latin trade in $B_n$ with disjoint mate
$T_2'=\big\{(r,c+5,s+5): (r,c,s)\in T_1'\big\}$.

Since $m\geq 5$, the latin trades $T_0$, $T_1$ and $T_2$ are pairwise disjoint. 
Moreover, $(T_1\cup T_2)\cap J=\big\{(2m;6m-2),(2m;6m+3)\big\}$. 
Next, $J$ intersects $T_0$ at $\big\{(2m-1;6m+1)\big\}$ and
\begin{equation}\label{e:out10mod12}
\big\{(\lceil x/3\rceil;x):x\in\{2m,2m+1,4m,4m+1\}\big\}.
\end{equation}
It follows that $L'=\big(B_n\setminus (T_0\cup T_1\cup T_2)\big)\cup
(T_0'\cup T_1'\cup T_2')$ contains a triplex. To see this, adjust $J$
by replacing $(2m,6m-2,8m-2)$, $(2m,6m+3,8m+3)$ and $(2m-1,6m+1,8m)$
with $(0,6m-2,8m-2)$, $(0,6m+3,8m+3)$ and $(0,6m+1,8m)$, respectively.
Finally, 
replace \eref{e:out10mod12} with the triples of $L'$ associated with the
following cells:
\[
\big\{
(\lceil 2m/3\rceil,2m+1),(\lceil(2m+1)/3\rceil,2m),
(\lceil 4m/3\rceil,4m+1),(\lceil(4m+1)/3\rceil,4m)
\big\}
\]
The resultant structure is a triplex in $L'$.

Suppose, for the sake of contradiction, $L'$ has a transversal
$K$. Recall in the following that $K$ intersects each row, column and
symbol exactly once.
If $(r,c,s)\in T_0'\cup T_1' \cup T_2'$, $1\leq \Delta_1(0,c,s)\leq 2m$,
$\Delta_1(2m-1,c,s)\in\big\{-1,-(2m-1)\big\}$ and 
$\Delta_1(2m,c,s)\in\{-2m,-2m+2\}$.
Summing over $(r,c,s)\in K$ with $r\in \{0,2m-1,2m\}$:
 $$\left\lvert \sum\Delta_1(r,c,s)\right\lvert\leq 4m-1.$$ 

Otherwise the only non-zero values for $\Delta_1$ occur strictly between rows $0$ and $2m-1$ of $T_0'\cup T_1'\cup T_2'$,
so we consider only when $0<r<2m-1$. In this case 
 $\Delta_1(r,c,s)=1$ if $c\in C=\{2m,4m,6m,8m\}$ and
$\Delta_1(r,c,s)=-1$ if $c\in C'=\{2m+1,4m+1,6m+1,8m+1\}$.
Summing over $(r,c,s)\in K$ with $r\not\in\{0,2m-1,2m\}$ and  $c\in C\cup C'$:
 $$\left\lvert\sum\Delta_1(r,c,s)\right\lvert\leq 4.$$ 

Similarly, the most the $\Delta_1$ function can accrue from $T_1'\cup
T_2'$ in rows strictly between $0$ and $2m-1$ is $4$ (in absolute
terms). Thus:
$$\left\lvert\sum_{\smash{(}r\smash{,c,s)\in K}} \Delta_1(r,c,s)\right\lvert\leq 4m+7<n/2,$$ 
a contradiction. 


\bigskip

Finally, we consider the case when $n\equiv 2\mod12$. 
Let $n=12m+2$.  By \tref{t:small} we may assume that $m\geq 6$. 

We start by identifying a subset of $B_n$ which we denote by $J$.
We let $J=J_0\cup J_1\cup J_2\cup J_3$ where:
\begin{align*}
J_0 & = \big\{(i;3i-2),(i;3i-1),(i;3i):  1\leq i\leq 2m \big\} \\
J_1 & = \big\{(2m;6m+3),(2m;6m+4),(2m+1;6m+1)\big\} \\
J_2 & = \big\{(i;3i+2),(i;3i+3),(i;3i+4): 2m+1\leq i\leq n/2-1 \big\} \\
J_3 & = \big\{(i;i-n/2),(i;i),(i;i+1): n/2\leq i\leq n-1\big\}. 
\end{align*}

Observe the following lemma. We omit the proof, which is elementary
and similar to that of \lref{l:almost}.

\begin{lemma}\label{l:almost3}
Each column of $B_n$ contains precisely $3$ elements of $J$. Each symbol
in $N_n$ appears in precisely $3$ elements of $J$. Each row of $B_n$
contains precisely $3$ elements of $J$, except for the first row which
contains no elements, row $2m$ which contains $5$ elements
and row $2m+1$ which contains $4$ elements of $J$.
\end{lemma}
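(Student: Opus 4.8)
The plan is to establish \lref{l:almost3} by the same bookkeeping used for \lref{l:almost}: verify the assertions about rows, columns, and symbols separately, treating each of the four pieces $J_0,\dots,J_3$ in turn. A useful preliminary is the count
\[
|J|=|J_0|+|J_1|+|J_2|+|J_3|=6m+3+12m+(18m+3)=36m+6=3(12m+2)=3n,
\]
which reduces the column and symbol claims to ``at most $3$'' statements. The row claim is then immediate: $J_0$ fills $3$ cells in each of rows $1,\dots,2m$; $J_2$ fills $3$ cells in each of rows $2m+1,\dots,n/2-1$; $J_3$ fills $3$ cells in each of rows $n/2,\dots,n-1$; and $J_1$ adds two more to row $2m$ and one more to row $2m+1$, with row $0$ untouched. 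So rows $2m$ and $2m+1$ carry $5$ and $4$ entries and every other nonempty row carries $3$.

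For columns I would argue exactly as in \lref{l:almost}. The $12m$ values $3i+2,3i+3,3i+4$ for $2m+1\le i\le n/2-1$ form $12m$ consecutive integers, hence represent modulo $n=12m+2$ every residue except $6m+3$ and $6m+4$, so $J_2$ meets all but those two columns once; $J_0$ meets columns $1,\dots,6m$ once each; $J_1$ meets columns $6m+1,6m+3,6m+4$; and in $J_3$ the three offsets $i-n/2$, $i$, $i+1$ (for $n/2\le i\le n-1$) sweep $[0,6m]$, $[6m+1,n-1]$, and $\{0\}\cup[6m+2,n-1]$. Adding the four contributions column by column gives exactly $3$ in every column.

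The symbol count is the only place that needs real care, and it is the step I expect to dominate the work. Writing the $B_n$-symbol of cell $(r;c)$ as $r+c$, the piece $J_3$ contributes the symbols $2i-n/2$, $2i$, $2i+1$ as $i$ ranges over $[n/2,n-1]$: the first and third each run once over all odd residues and the middle one once over all even residues, so $J_3$ supplies every odd symbol twice and every even symbol once --- just as $J_4$ did in \lref{l:almost}. It remains to check that $J_0\cup J_1\cup J_2$ supplies every odd symbol once and every even symbol twice. Here $J_0$ contributes $\{4i-2,4i-1,4i:1\le i\le2m\}$, i.e.\ all of $[2,8m]$ except the residues $\equiv1\pmod4$; $J_1$ contributes the three symbols $8m+2,8m+3,8m+4$; and $J_2$ contributes $\{4i+2,4i+3,4i+4:2m+1\le i\le n/2-1\}$ reduced modulo $n$. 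The obstacle is precisely that last reduction: since $n=12m+2\equiv2\pmod4$, reduction modulo $n$ shifts residues modulo $4$, so the class skipped by $J_2$ changes across the wraparound, and the wrapped part of $J_2$ revisits the symbol $0$ and a short interval of symbols near it. Carrying out this modular accounting carefully, and checking that the three exceptional symbols of $J_1$ plug exactly the gaps near $8m+3$, one finds that $J_0\cup J_1\cup J_2$ covers each odd symbol once and each even symbol twice; combined with the $J_3$ tally, each symbol appears three times. Everything is a finite elementary computation; the one thing that is genuinely easy to get wrong is this modular bookkeeping inside $J_2$.
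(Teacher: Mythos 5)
Your proposal is correct and takes the same route the paper indicates for its (omitted) proof, namely the row/column/symbol bookkeeping of \lref{l:almost}. The one computation you defer --- reducing the $J_2$ symbols $\{4i+2,4i+3,4i+4\}$ modulo $n=12m+2$, where the wraparound shifts the skipped residue class from $1$ to $3 \pmod 4$ and revisits symbol $0$ --- does work out exactly as you assert, with $J_0\cup J_1\cup J_2$ covering each even symbol twice and each odd symbol once.
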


As in previous cases we wish to use latin trades in $B_n$ 
to introduce a triplex but not a transversal.  To
this end we describe the following latin trades $T_0$ and $T_1$ within
the first $2m$ (respectively, $2m+1$) rows of $B_n$.
\begin{align*}
T_0 & =  
\big\{(0;2jm-2),(2m;2jm-2): 3\leq j\leq 6 \big\} \\
& \qquad \cup \big\{(i;2m-4),(i;2m-3),(i;4m-3),(i;4m-2): 0\leq i\leq 2m\big\}, \\
T_1 & = \big\{(0;(2m+1)j-2),(2m+1;(2m+1)j-2): 0\leq j\leq 4\big\}\\
& \qquad \cup \big\{(0;10m+1),(1;10m),(1;10m+1),(2;10m),(2;10m+1)\big\} \\
& \qquad \cup \big\{(2m+1;10m+1),(1;12m-1),(1;12m),(2;12m-1),(2;12m)\big\} \\
& \qquad \cup \big\{(2i+3;10m-2i-2),(2i+3;10m-2i),\\
& \qquad \qquad (2i+3;12m-2i-3),(2i+3;12m-2i-1): 0\leq i\leq m-2 \big\}.
\end{align*} 

To verify that $T_0$ and $T_1$ each give latin trades in $B_n$ we
exhibit their respective (unique) disjoint mates $T_0'$ and $T_1'$.
\begin{align*}
T_0' & =  
\big\{(0,2jm-2,2(j+1)m-2),(2m,2jm-2,2jm-2): 3\leq j\leq 6 \big\} \\
& \qquad \cup \big\{(0,2m-4,2m-3),(0,2m-3,4m-3),(2m,2m-4,2m-4)\big\} \\
& \qquad \cup \big\{(2m,2m-3,4m-4),(0,4m-3,4m-2),(0,4m-2,6m-2)\big\} \\
& \qquad \cup\big\{(2m,4m-3,4m-3),(2m,4m-2,6m-3)\big\} \\
& \qquad \cup \big\{(i,2m-4,2m+i-3),(i,2m-3,2m+i-4),(i,4m-3,4m+i-2), \\
& \qquad \qquad (i,4m-2,4m+i-3): 0< i< 2m\big\}. \\ 
T_1' & =  \big\{(0,(2m+1)j-2,(2m+1)(j+1)-2), \\
& \qquad \qquad (2m+1,(2m+1)j-2,(2m+1)j-2): 1\leq j\leq 3\big\}\\
& \qquad \cup\big\{(0,12m,2m-1),(2m+1,12m,0),(0,8m+2,10m+1)\big\} \\
& \qquad \cup\big\{(2m+1,8m+2,8m+2),(0,10m+1,12m),(1,10m,10m+2)\big\} \\
& \qquad \cup \big\{(1,10m+1,10m+1),(2,10m,10m+3),(2,10m+1,10m+2)\big\} \\
& \qquad \cup \big\{(2m+1,10m+1,10m+3),(1,12m-1,12m+1),(1,12m,12m)\big\} \\
& \qquad \cup\big\{(2,12m-1,0),(2,12m,12m+1)\big\} \\
& \qquad \cup \big\{(2i+3,10m-2i-2,10m+3),(2i+3,10m-2i,10m+1), \\
& \qquad \qquad (2i+3,12m-2i-3,0),(2i+3,12m-2i-1,12m): 0\leq i\leq m-2 \big\}.
\end{align*}

We also define $T_2=\big\{(r,c+6,s+6): (r,c,s)\in T_0\big\}$ which is clearly a latin trade in $B_n$ with disjoint mate $T_2'=\big\{(r,c+6,s+6): (r,c,s)\in T_0'\big\}$.  

Let $m\geq 6$. Observe that the latin trades $T_0$, $T_1$ and $T_2$ are pairwise disjoint. 
Moreover, $T_0\cup T_2$ intersects $J$ at $(2m;6m-2),(2m;6m+4)$ and 
\begin{equation}\label{e:out2mod12}
\big\{
(\lceil x/3\rceil;x):x\in\{2m-4,2m-3,2m+2,2m+3,4m-3,4m-2,4m+3,4m+4\}
\big\}
\end{equation}
Also, $J$ intersects $T_1$ at $\big\{(2m+1;6m+1)\big\}$. 

It follows that $L'=\big(B_n\setminus (T_0\cup T_1\cup T_2)\big)\cup 
(T_0'\cup T_1'\cup T_2')$ contains a triplex. To see this, adjust $J$ by 
replacing $(2m,6m-2,8m-2)$, $(2m,6m+4,8m+4)$ and 
$(2m+1,6m+1,8m+2)$ with $(0,6m-2,8m-2)$, $(0,6m+4,8m+4)$ and 
$(0,6m+1,8m+2)$, respectively. Finally, 
replace \eref{e:out2mod12} with the triples of $L'$ associated with the
following cells:
\begin{align*}
\big\{
&(\lceil(2m-4)/3\rceil,2m-3),(\lceil(2m-3)/3\rceil,2m-4),\\
&(\lceil(2m+2)/3\rceil,2m+3),(\lceil(2m+3)/3\rceil,2m+2),\\
&(\lceil(4m-3)/3\rceil,4m-2),(\lceil(4m-2)/3\rceil,4m-3),\\
&(\lceil(4m+3)/3\rceil,4m+4),(\lceil(4m+4)/3\rceil,4m+3)
\big\}.
\end{align*}
The resultant structure is a triplex in $L'$. 

Suppose, for the sake of contradiction, $L'$ has a transversal $K$. 
If $(r,c,s)\in T_0'\cup T_1' \cup T_2'$, $1\leq \Delta_1(0,c,s)\leq 2m+1$,
$\Delta_1(2m,c,s)\in\{-1,-2m\}$ and 
$\Delta_1(2m+1,c,s)\in\{-2m-1,-2m+1\}$.
Summing over $(r,c,s)\in K$ with $r\in \{0,2m,2m+1\}$:
 $$\left\lvert\sum \Delta_1(r,c,s)\right\lvert\leq 4m+1.$$

Otherwise the only non-zero values for $\Delta_1$ occur strictly between rows $0$ and $2m$ of $T_0'\cup T_1'\cup T_2'$,
so we consider only when $0<r<2m$. In this case 
 $\Delta_1(r,c,s)=1$ if $c\in C=\{2m-4,2m+2,4m-3,4m+3\}$ and
$\Delta_1(r,c,s)=-1$ if $c\in C'=\{2m-3,2m+3,4m-2,4m+4\}$.
Summing over $(r,c,s)\in K$ with $r\not\in\{0,2m,2m+1\}$ and  $c\in C\cup C'$:
 $$\left\lvert\sum \Delta_1(r,c,s)\right\lvert\leq 4.$$ 
Similarly, the most the $\Delta_1$ function can accrue from $T_1'$ in rows 
strictly between $0$ and $2m$ is $6$ (in absolute terms). Thus:
$$
\left\lvert\sum_{\smash{(}r\smash{,c,s)\in K}} \Delta_1(r,c,s)\right\lvert\leq 4m+11<n/2,
$$ 
a contradiction.

\section{Conclusion}

It has been known since the 19th century that there are no
transversals in step-type Latin squares of even order composed of odd
ordered subsquares. This family includes the Cayley table of the
cyclic group of any even order.  We showed in \sref{s:numnotrans} that
the absence of transversals in these squares is a surprisingly robust
property.  Specifically, the entries in up to $\sqrt{n}$ consecutive
rows may be rearranged in any way and there will still be no
transversal. A consequence is that there are at least
$n^{n^{3/2}(1/2-o(1))}$ species of transversal-free latin squares of
each even order $n$.

Our other main result was to prove \cjref{cj:triplexnotrans}, that 
for all even $n>4$ there is a latin square of order $n$ that contains
a triplex but no transversal. It would be interesting to know how far
this result generalises.  We propose:

\begin{conj}
For each odd $k$ there exists $N$
such that for all even $n\ge N$ there exists a latin square of order
$n$ that contains a $k$-plex but no $k'$-plex for odd $k'<k$.
\end{conj}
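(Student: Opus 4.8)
The natural plan is to merge the two constructive techniques already in the paper --- \tref{t:kk2} and the latin-trade surgery of \sref{s:triplexnotrans} --- into one scheme that works for every odd $k$, and then to clean up the finitely many residual small orders in each congruence class by a computer search in the style of \tref{t:small}. As always we start from $B_n$, which has no odd plexes by \lref{l:Delta}, and we only ever modify $B_n$ inside latin trades. First reduce modulo $2k$ (or, as the $k=3$ proof effectively does, modulo $\operatorname{lcm}(4,k)=4k$, which also pins down $n$ modulo a power of $2$). For $n\equiv0\bmod 2k$ with $n\ge4k$ the required square is provided by \tref{t:kk2}, so it remains to handle the other residues.

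For a residue class not covered by \tref{t:kk2}, fix a convenient factorisation of $n$ and build, inside $B_n$, a set $J$ of $kn$ entries that is a \emph{near $k$-plex}: exactly $k$ entries in each column, exactly $k$ occurrences of each symbol, and exactly $k$ entries in every row except two distinguished rows $r_1,r_2$ carrying $k+2$ and $k-2$ entries respectively. Just as in \lref{l:almost} and \lref{l:almost2}, $J$ is assembled from $O(k)$ blocks of broken-diagonal entries whose slopes and offsets are arranged so that the even symbols and the odd symbols each occur with multiplicity exactly $k$; this balancing, which generalises \lref{l:almost}, must be done case by case. Next, exhibit a latin trade $T\subset B_n$ with disjoint mate $T'$ such that replacing $T$ by $T'$ moves exactly two entries of $J$ from row $r_1$ to row $r_2$ and changes nothing else in $J$. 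The trade may span many rows (as $T_1,T_2$ do when $n\equiv10\bmod12$), but it must have bounded \emph{footprint}: outside $O(1)$ special rows it only ever alters a symbol by $\pm1$, and the columns on which $\Delta_1$ is nonzero on $T'$ form a set of bounded size. Setting $L'=(B_n\setminus T)\cup T'$, the repaired $J$ is then a genuine $k$-plex of $L'$.

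For non-existence of smaller odd plexes, observe that in $L'$ the function $\Delta_1$ vanishes except on the entries introduced by the trade, so for any odd $k'$-plex $K$ of $L'$,
\[
\Bigl|\sum_{(r,c,s)\in K}\Delta_1(r,c,s)\Bigr|
\le k'\cdot(\text{number of special rows})\cdot\max|\Delta_1|
 + k'\cdot(\text{number of special columns}).
\]
One then needs the right-hand side to be at most $(1/2-\delta)n$ for some $\delta=\delta(k)>0$ once $n$ is large relative to $k$; granted this, \lref{l:Delta} forbids $K$ for every odd $k'\le k-2$. Each congruence class is thereby settled for all large $n$, and the finitely many small $n$ below the threshold are handled by completing a partial $k$-plex in an explicit perturbation of $B_n$, exactly as in \tref{t:small} --- routine, if increasingly laborious.

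The crux, and the reason the statement is still only a conjecture, is to realise all of this \emph{uniformly in} $k$. For $k=3$ it suffices for $J$ to be ``off by $2$'' in two rows, and a single small trade (or three of them) repairs it while the large values of $\Delta_1$ on $T'$ stay around $n/6$ and there are only three special rows, so the decisive inequality just barely holds for $n$ large. For larger $k$ the symbol balancing is more delicate and, more seriously, the bound above scales roughly like $(k-2)$ times the number of special rows times $\max|\Delta_1|$, so to keep it below $n/2$ one must either use very few special rows, or force the large $\Delta_1$ values to partly cancel (the elementary inequality $|ma-mb|\le m\max(a,b)$ exploited in \tref{t:kk2}), or cut $n$ into more pieces so that $\max|\Delta_1|$ drops to $O(n/k)$. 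Finding a single family of near-$k$-plexes and trades that meets all of these constraints at once is, I expect, the real content of a proof; proving the $k=5$ case along the above lines would be a natural first step. (A slick ``order $n$ to order $n+2$'' reduction would be more attractive still, but the standard prolongation operations require a transversal, which these squares are built precisely to avoid.)
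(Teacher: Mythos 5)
This statement is the open conjecture posed in the paper's conclusion; the paper contains no proof of it, only the two partial results you correctly identify (\tref{t:kk2} for $n\equiv0\bmod 2k$, and the full resolution of the $k=3$ case in \sref{s:triplexnotrans}). Your proposal is therefore not being measured against an existing argument, and, to your credit, you say plainly that it is a programme rather than a proof. The programme is the natural one, but the gaps you wave at are exactly where the mathematics lives, and none of them is closed. Concretely: (1) the near-$k$-plex $J$ is never constructed for general odd $k$ and a general residue of $n$ modulo $2k$ --- the parity bookkeeping in \lref{l:almost} (odd symbols from one block, even symbols from another, each landing at multiplicity exactly $3$) does not visibly generalise, and ``must be done case by case'' is an admission that the construction is absent; (2) the trades $T$ with bounded footprint that relocate exactly two entries of $J$ without disturbing the rest are likewise hypothesised, not exhibited; (3) the decisive inequality is only stated as a target. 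On point (3) note the quantitative tension: \lref{l:Delta} forbids a $k'$-plex only if the accumulated $|\Delta_1|$ stays below $n/2$, and the worst case $k'=k-2$ forces each special row to contribute at most about $n/(2k)$ on average. In \tref{t:kk2} this is achieved because the trade lives in exactly two rows with $|\Delta_1|=m=n/(2k)$; for other residues of $n$ one must either replicate that precision or engineer cancellation, and no mechanism for doing so uniformly in $k$ is given. (The computer-search step for small $n$ is actually unnecessary for the statement as phrased, since only $n\ge N$ is required.)

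In short, you have accurately reconstructed the authors' own intended line of attack and correctly diagnosed why it stalls, but the proposal proves nothing beyond what \tref{t:kk2} and \sref{s:triplexnotrans} already establish. A genuine contribution along these lines would be to carry out your outline for $k=5$: construct $J$ and the trades explicitly for each residue of $n$ modulo $10$ (or $20$) and verify the $\Delta_1$ inequality against $k'\in\{1,3\}$. That would be a real theorem; what is written here is a restatement of the difficulty.
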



In \cite{EW08}, examples were constructed where the smallest odd
$k$-plexes have $k=n/4+O(1)$, where $n$ is the order of the
latin square.  This raises the interesting question of whether
$n/4+O(1)$ is as large as possible in a result of this type. 

Another unsolved question from \cite{EW08} is whether there exists
any latin square that has an $a$-plex and a $c$-plex but no
$b$-plex, for odd integers $a<b<c$.

\subsection*{Acknowledgement}
We thank Michael Brand for interesting discussions on our results
in \sref{s:numnotrans}, and Judith Egan for some useful feedback
on a draft of the paper.

\bigbreak\bigbreak

\end{document}